\documentclass[10pt]{amsart}
\usepackage[a4paper, total={6in,8in}]{geometry}
\usepackage[utf8]{inputenc}
\usepackage[T1]{fontenc}
\usepackage[english]{babel}
\usepackage{amsmath}
\usepackage{amsfonts}
\usepackage{amssymb}
\usepackage{amsthm}
\usepackage{bbm}
\usepackage{csquotes}
\usepackage{lipsum}
\usepackage{esint}
\usepackage{mathtools}
\usepackage{tikz-cd}
\usepackage[shortlabels]{enumitem}
\usepackage[backend=bibtex,style=numeric,doi=false,url=false]{biblatex}
\addbibresource{references.bib}

\makeatletter
\newcommand{\proofpart}[2]{\par
  \addvspace{\medskipamount}\noindent\emph{Part #1: #2}\par\nobreak
  \addvspace{\smallskipamount}\@afterheading
}
\makeatother

\DeclareMathOperator{\supp}{supp}
\DeclareMathOperator{\Div}{div}

\theoremstyle{plain}
\newtheorem{theorem}{Theorem}

\newtheorem{proposition}{Proposition}
\newtheorem{lemma}{Lemma}

\theoremstyle{definition}
\newtheorem{definition}{Definition}
\newtheorem{remark}{Remark}

\newcommand{\abar}{\overline{a}}
\newcommand{\bbar}{\overline{b}}
\newcommand{\cbar}{\overline{c}}
\newcommand{\Abar}{\overline{A}}
\newcommand{\Bbar}{\overline{B}}
\newcommand{\Cbar}{\overline{C}}

\newcommand{\pdv}[2]{\frac{\partial #1}{\partial #2}}
\newcommand{\mpdv}[3]{\frac{\partial^2 #1}{\partial #2 \partial #3}}
\newcommand{\intxv}{\int_{xv}}
\newcommand{\inttxv}{\int_{txv}}

\newcommand{\weakstarto}{\overset{\ast}{\rightharpoonup}}
\newcommand{\weakto}{\rightharpoonup}
\newcommand{\Rv}{\mathbb{R}^N_v}
\newcommand{\Rx}{\mathbb{R}^N_x}
\newcommand{\Rxv}{\mathbb{R}^{2N}_{x,v}}
\newcommand{\Rxvv}{\mathbb{R}^{3N}_{xvv_*}}

\title{On the semi-classical limit for the Landau-Fermi-Dirac equation}
\author{Paulo Sampaio}

\begin{document}
\begin{abstract}
	We study sequences of solutions to the inhomogeneous Landau-Fermi-Dirac equation with Coulomb potential in which the quantum parameter converges to zero.
	Our main result establishes the compactness of these sequences, which allows us to show that, up to a subsequence, these solutions converge to a renormalized solution of the classical Landau equation with a defect measure, as defined by Villani.
	
	To do this, we work in the class of solutions that are obtained through approximation procedures. 
	For these solutions, we were able to show compactness in the vanishing quantum parameter limit through a diagonal argument, which combines techniques from the study of Cauchy problems for both the classical Landau and the Landau-Fermi-Dirac equations.
\end{abstract}

\maketitle
\tableofcontents

\section{Introduction}
In 1936 Landau \cite{landau-translated-1936} proposed, based on phenomenological arguments, an equation that models the behavior of dilute plasmas, today known as the Landau equation,
\begin{equation}
	\label{eq:landau}
	\pdv{f}{t} + v \cdot \nabla_x f 
	= \Div_v
	\left(
		\int a(v - v_*) \big(
					f_* \nabla_v f -
					f  \nabla_{v_*}f_*
					\big) \; dv_*
	\right).\\
\end{equation}
The unknown $f = f(t,x,v)$ represents the probability density of finding a particle at time $t$, in position $x$, with velocity $v$.
Above, we have used the abbreviation $f_*$ for the function $f(t, x, v_*)$ as well as the convention $(\Div_v M)_i = \sum_{j=1}^{N} \pdv{M_{ij}}{v_j}$ for the divergence of a matrix function $M$.

The left-hand side is the transport part of the equation, which models the inertial aspect of the particles' movement.
In fact, if the right-hand side of the equation were zero, then \eqref{eq:landau} would reduce to the free transport equation, and the particles would then follow straight trajectories along the characteristic lines.

The right-hand side, on the other hand, models how these trajectories are affected due to binary collisions between the particles.
The matrix $a(v-v_*)$ inside the integral is called the \emph{collision kernel} and typically has the form
\begin{equation}
	\label{collision-kernel-formula}
	a(z) = \Gamma(|z|) \left(I_n - \frac{z \otimes z}{|z|^2}\right),
\end{equation}
where $I_n$ denotes the $n \times n$ identity matrix and $\Gamma(|z|)$ is an $L^p_{loc}(\mathbb{R}^N)$ function, called the \emph{cross section}, which varies according to the interaction potential between the particles.
This way, if the particles interact through a potential of the type
\[
	U(x) = \frac{k}{|x|^s},
\]
then it can be shown that $\Gamma(|z|) = K |z|^\theta$, where $\theta = 3-4/s$, for $s \geq 2$.
The cases typically considered in the literature correspond to when $\Gamma(|z|) = K |z|^{\gamma+2}$, where $-3 \leq \gamma \leq 1$.
If $0 < \gamma \leq 1$ we say the particles interact through a \emph{hard potential} and if $\gamma < 0$ we say we're dealing with a \emph{soft potential}.

Although not the way it was originally deduced, the \eqref{eq:landau} equation is generally seen as the asymptotic limit of the Bolzamann equation when the collisions are predominantly grazing, i.e. the angle of deviation in each collision is small.
This type of behavior is expected of very hot and rarefied gases and, therefore, the Landau equation finds an important application in plasma physics.

In this case, the particles interact with each other through an \emph{Coulomb} potential, where $s = 1$ and therefore $\gamma = -3$.
This is the only physical potential among those dealt with in the literature, which is why it is the one we will focus on in this paper.
Despite its physicality, this is also the most singular potential.
In fact, we will see later that this singularity imposes problems even in defining a solution to the equation, and we will see that the solutions constructed in the literature for the equation with this potential are of the renormalized type.

The above description only assumes that the particles in question are point-like, as in classical mechanics, and obey Maxwell-Boltzmann type statistics.
If, on the other hand, we want a model for quantum particles, some changes must be taken into account.

Take the case of fermions.
These are particles that obey the Pauli exclusion principle, which states that no two particles can be in the same quantum state at the same time.
Statistically, this implies that the distribution function must have at most one particle per $h^3/m^3$ of space, implying the pointwise bound
\begin{equation}
	\label{ineq:pauli-exclusion}
	0 \leq f(t,x,v) \leq \frac{1}{\varepsilon}
	\equiv \frac{h^3}{m^3 \beta}
\end{equation}

Above we have that $h$ is the Planck constant, $m$ is the mass of the particle and $\beta$ is called the degeneracy of the particle.
Fermions are particles that undergo a process of \emph{saturation}: if we already have too many particles in a given region of phase space, it becomes less likely that even more particles will enter it.
Thus, the collision probabilities for the quantum case are different from those of the classical case, and this new underlying statistics is called Fermi-Dirac.
Adapting the Landau equation to these new probabilities gives rise to the so-called Landau-Fermi-Dirac (LFD) equation, which reads
\begin{equation}
	\label{eq:lfd}
	\pdv{f}{t} + v \cdot \nabla_x f 
	= \Div_v
	\left(
		\int a(v - v_*) \big(
					f_* (1 - \varepsilon f_*) \nabla_v f -
					f (1 - \varepsilon f) \nabla_{v_*}f_*
					\big) dv_*
	\right)\\
\end{equation}

Historically, the first adaptation of the kinetic theory of gases to a quantum version began in 1928 with Nordheim \cite{nordheim-1928}, followed shortly after by Uehling and Uhlenbeck \cite{uehling-1933}, who adapted the Boltzmann equation to take Fermi-Dirac statistics into account, giving rise to the Boltzmann-Fermi-Dirac equation.

In 1979 Danielewicz \cite{danielewicz-1980} used the asymptotic methods used to derive the Landau equation from Boltzmann's to take the limit of the Boltzmann-Fermi-Dirac equation, effectively arriving at the Landau-Fermi-Dirac equation.
The same equation, however, had already appeared in the literature back 1970, in the works of Kadomtsev and Pogutse on the relaxation of stellar systems \cite{kadomtsev_1970}.

Once in possession of a quantum version of a classical equation, one wonders about the compatibility between the two.
In fact, the correspondence principle of quantum mechanics states that the behavior of quantum systems should be close to that predicted by classical mechanics in the limit of a small Planck constant.
In fact, if we formally take the limit $\varepsilon \to 0$ in equation \eqref{eq:lfd}, we recover equation \eqref{eq:landau}. 

In this article, we propose to show that the above procedure is rigorously justified.
That is, that solutions of the Landau-Fermi-Dirac equation converge to solutions of the Landau equation in the limit of a small quantum parameter $\varepsilon$, a procedure mathematically called a \emph{semi-classical limit}.
More specifically, we will prove that the LFD solutions constructed in \cite{sampaio-2024} in fact converge to solutions of the Landau equation constructed by Villani in \cite{villani_1996}, and therefore the Cauchy theories constructed for these two equations are compatible.
Furthermore, such a result also justifies the use of the LFD equation as an approximation of the Landau equation. 

A result of this type has already been shown for the Boltzmann-Fermi-Dirac equation by He, Lu and Pulvirenti in \cite{He-2021}, where it is shown that this equation, in its homogeneous form in space (i.e. the distribution function $f$ depends only on $(t,v)$), converges to the homogeneous Landau equation.
However, semi-classical limit results for the LFD equation are unknown to the author. 

\section{Preliminaries and main result}
Before stating the central result, let's briefly review the Cauchy theory for the Landau and LFD equations.
For the Landau equation one can show that formally
\begin{equation}
	\label{eq:conservation-laws-formal}
	\frac{d}{dt} \iint f(t) \varphi \, dxdv = 0
\end{equation}
if $\varphi = 1, v_i, |v|^2$ or $|x - vt|^2$, which physically corresponds to the conservation of mass, linear momentum, kinetic energy and moment of inertia, respectively.
On the other hand, if we define the entropy
\begin{equation}
	\label{eq:classical-entropy}
	H(t) = \iint f \log f dxdv,
\end{equation}
we have that, formally, if $f$ is a solution of \eqref{eq:landau}, then the functional $H(t)$ is decreasing.

Thus, it is reasonable to assume that these quantities are bounded at the time $t=0$, i.e,
\begin{equation}
	\label{ineq:f_0-bounds}
	\iint f_0 (1 + |x|^2 + |v|^2 + \log f_0)  \, dxdv < \infty
\end{equation}
for a given initial $f_0$, and so ideally we would like to construct a solution that obeys the conservation laws and entropy decay.
In particular, if we had such a solution, then the quantity
\[
	\sup_{t > 0} \iint f(t) (1 + |x|^2 + |v|^2 + \log f(t))  \, dxdv
\] 
would be bounded by \eqref{ineq:f_0-bounds}, and so we look for our solution in the space of functions such that this quantity is finite.

Here the singularity of the collision kernel comes into play.
Indeed, if we consider this kernel for a Coulomb-type potential, the above bounds do not guarantee that the collision integral on the right-hand side of \eqref{eq:landau} is well-defined, even in the sense of the distributions.
One way to get a sense of the solution even with this limitation is to consider \emph{renormalized solutions}. 

Formally, the renormalization process is a change in the unknown of the equation.
Instead of solving the Landau equation for $f$, we will choose a smooth, bounded function $\beta$ and try to solve the equation for $g = \beta(f)$.
Since the $\beta$ function is bounded, ideally we get an unknown $g$ that we know is a bounded function a priori, which can help smooth out the product of singularities.

So let $\beta = \beta(t)$ be a smooth function, called non-linearity.
Multiplying the Landau equation by $\beta'(f)$, we get
\[
	\pdv{\beta(f)}{t} + v \cdot \nabla_x \beta(f)
	= \beta'(f) \pdv{}{v_i} \left\{
		\abar_{ij} \pdv{f}{v_j} - \bbar_i f
	\right\},
\]
where $\abar_{ij} = a_{ij} *_v f$ and $\bbar_i = \pdv{\abar_{ij}}{v_j}$.
We can rewrite this equation as
\begin{equation}
	\label{landau-renorm}
	\pdv{\beta(f)}{t} + v \cdot \nabla_x \beta(f)
	= \pdv{}{v_i} \left\{
		\abar_{ij} \pdv{\beta(f)}{v_j} - \bbar_i \beta(f)
	\right\}
	- \beta''(f) \abar_{ij} \pdv{f}{v_i} \pdv{f}{v_j}
	+ \cbar [\beta(f) - \beta'(f) f],
\end{equation}
where $\cbar = \mpdv{\abar_{ij}}{v_i}{v_j}$, and we say that $f$ is a \emph{renormalized solution} of the Landau equation if the distribution $\beta(f)$ solves \eqref{landau-renorm} in the sense of distributions.

In order to tackle the Coulomb case, we assume throughout the paper that in Landau equation we have the $a(z)$ of the form \eqref{collision-kernel-formula}, that the cross-section $\Gamma$ satisfies
\begin{equation}
	\label{property:Gamma-ellipticity}
	\forall R > 0 \text{ there exists a } K_R > 0
	\text{ such that }
	\Gamma(|z|) \geq K_R, \,\forall |z| \leq R,
\end{equation}
and that is has the integrability
\begin{equation}
	\label{property:cross-section-regularity}
	\Gamma(|z|) \in L^{r}(\mathbb{R}^N) + L^\infty(\mathbb{R}^N),
\end{equation}
for some $r > \frac{N}{N-1}$.

The typical strategy for constructing solutions to this type of equation is to take solutions to a well-chosen approximate equation, whose Cauchy problem is easier to study, and then prove a compactness theorem for the solutions, enabling us to pass this equation to the limit. 

For the Landau equation, such a compactness theorem was obtained by Lions in \cite{lions_1994}.
However, as shown by Villani in \cite{villani_1996}, in the passage to the limit of the quadratic term in the derivatives $\pdv{f}{v_i}$ imposes problems and a defect measure appears, giving rise to an even weaker notion of solution.

Let us then notate $L^1_2(\Rxv)$ the space of functions $g$ such that
\[
	\iint_{\Rxv} |g(x,v)| (1 + |x|^2 + |v|^2) \; dxdv < \infty.
\]
The existence result from \cite{villani_1996} then reads

\begin{definition}
	\label{def-renormalized-landau}
	Let $f_0 \in L^1_2(\Rxv)$ be such that $f_0 \geq 0$ and for every $\delta > 0$, let $\beta_\delta(t) = \frac{t}{1+\delta t}$.
	We say that $f \in \mathcal{C}((0,\infty); \mathcal{D}'(\Rxv)) \cap L^\infty((0,\infty); L^1_2(\Rxv))$ is a renormalized solution of the Landau equation with defect measure and initial data $f_0$ if, for every $\delta > 0$, there exists a nonnegative measure $\mu_\delta$, bounded on all sets $(0,T) \times \Rx \times B_R$, $T < \infty$, such that $\beta_\delta(f)$ satisfies
	\begin{multline}
		\label{eq:landau-renormalized}
		\pdv{\beta_\delta(f)}{t} + v \cdot \nabla_x \beta_\delta(f)
		= \pdv{}{v_i} \left\{
			\abar_{ij} \pdv{\beta_\delta(f)}{v_j} - \bbar_i \beta_\delta(f)
			\right\}\\
			- \beta''(f) \abar_{ij} \pdv{f}{v_i} \pdv{f}{v_j}
			+ \cbar [\beta_\delta(f) - \beta_\delta'(f) f]
			+ \mu_\delta
	\end{multline}
	in $\mathcal{D}'((0,\infty) \times \Rxv)$, $f(t) \to f_0$ as $t \to 0^+$ in $\mathcal{D}'(\Rxv)$ and we have

	1) Conservation of mass and linear momentum:
	\[
		\iint f(t,x,v) \, dxdv
		= \iint f_0(x,v) \,dxdv,
	\]\[
		\iint f(t,x,v) v_i \, dxdv
		= \iint f_0(x,v) v_i \,dxdv
		\hspace{10pt} \forall i \in \{ 1, \cdots, N \}.
	\]

	2) Decay of kinetic energy and moment of inertia:
	\[
		\iint f(t,x,v) |v|^2 \,dxdv
		\leq \iint f_0(x,v) |v|^2 \,dxdv,
	\]
	\[
		\iint f(t,x,v) |x-tv|^2 \,dxdv
		\leq \iint f_0(x,v) |x-tv|^2 \,dxdv.
	\]
	
	3) Entropy inequality:
	\[
		\iint f(t,x,v) \log f(t,x,v)\,dxdv + \int_{0}^{t}\iint d(\tau,x,v)\,dxdvd\tau \leq \iint f_0(x,v) \log f_0(x,v)\,dxdv,
	\]
	where
	\begin{equation}
		\label{eq:entropy-dissip-classical}
		d(t,x,v) =
		\int a(v-v_{*})ff_{*}
		\left|\frac{\nabla_{v}f}{f}-\frac{\nabla_{v_{*}}f_{*}}{f_{*}}\right|^{\otimes2}dv_{*},
	\end{equation}
	is the entropy dissipation.
\end{definition}

More precisely, a renormalized solution with defect measure of the Landau equation satisfies, for every $\varphi \in C^\infty_c([0,\infty) \times \Rxv)$,
\begin{multline*}
	- \inttxv \beta_\delta(f) \pdv{\varphi}{t}
	- \inttxv \beta_\delta(f) v \cdot \nabla_x \varphi
	= \inttxv \abar_{ij} \beta_\delta(f) \mpdv{\varphi}{v_i}{v_j}\\
	+ \inttxv \left[
		\pdv{\abar_{ij}}{v_j} \beta_\delta(f)
		+ \bbar_i \beta_\delta(f)
		\right] \pdv{\varphi}{v_i}
	+ \inttxv \abar_{ij} \pdv{\gamma_\delta(f)}{v_i} \pdv{\gamma_\delta(f)}{v_j} \varphi\\
	+ \inttxv \cbar [ \beta_\delta(f) - f \beta_\delta'(f)] \varphi
	+ \inttxv \varphi d\mu
\end{multline*}
and we also have, for every $\psi \in \mathcal{D}(\mathbb{R}^N_x \times \mathbb{R}^N_v)$, 
\[
	\intxv f(t) \psi\; \xrightarrow{t \to 0^+} \intxv f_0 \psi.
\]

Some nuances of the above definition are worth highlighting.
Note that while the equation on $(0,\infty) \times \Rxv$ is stated for $\beta_\delta(f)$, the statement of the initial condition rests solely on the function $f$, without renormalization.
In addition, some terms in the above statement are not well defined or do not have a canonical definition.

\begin{remark}
	\label{rem:product-rule-interp}
	In equation \eqref{eq:landau-renormalized}, since we have no regularity for the derivatives of $f$, the term $\abar_{ij} \pdv{\beta_\delta(f)}{v_j}$ must be interpreted as $\pdv{}{v_j} \left[\abar_{ij} \beta_\delta(f) \right] - \pdv{\abar_{ij}}{v_j} \beta_\delta(f)$, in the sense of distributions.
\end{remark}

\begin{remark}
	\label{rem:quadratic-term}
	Since we don't suppose any regularity upon renormalized solutions, one might ask what the definition of the quadratic term $- \beta''(f) \abar_{ij} \pdv{f}{v_i} \pdv{f}{v_j}$ in the sense of distributions is.
	In fact, this should be viewed as a notation for
	\begin{equation}
		\label{eq:formula-for-quad-term}
		\int_{v_*}
		\left|
			\sqrt{a(v-v_*)} \sqrt{f_*} \nabla_v \gamma_\delta(f)
		\right|^2,
	\end{equation}
	where inside the square we have an $L^2_{loc}((0,\infty) \times \Rxv)$ function, which is defined in the sense of distributions as
	\[
		\sqrt{a(v-v_*)} \sqrt{f_*} \nabla_v \gamma_\delta(f)
		= \Div_{v} \left[ \sqrt{a(v-v_*)} \sqrt{f_*} \gamma_\delta(f) \right]
			- \Div_{v} \left[ \sqrt{a(v-v_*)} \right] \sqrt{f_*} \gamma_\delta(f)
	\]

	This way, the term $- \beta''(f) \abar_{ij} \pdv{f}{v_i} \pdv{f}{v_j}$ is an $L^1((0,\infty) \times \Rxv)$ function, independent of the sequence of approximating functions $(f^n)_n$.
	However, we cannot interpret this immediately as a pointwise product of functions, and this term should rather be seen as a "black box".
\end{remark}

\begin{remark}
	\label{rem:entropy-dissip-classical}
	In the same vein as the definition of the quadratic term above, the entropy dissipation \eqref{eq:entropy-dissip-classical} should be interpreted as the $L^2((0,\infty) \times \Rxv)$ norm of the function
	\begin{multline*}
		\Div_{v} \left( \sqrt{a(v-v_*)} \sqrt{f} \sqrt{f_*} \right)
		- \Div_v \left( \sqrt{a(v-v_*)} \right) \sqrt{f} \sqrt{f_*}\\
		- \Div_{v_*}\left( \sqrt{a(v-v_*)} \sqrt{f} \sqrt{f_*} \right)
		+ \Div_{v_*}\left( \sqrt{a(v-v_*)} \right) \sqrt{f} \sqrt{f_*}.
	\end{multline*}
\end{remark}

For the Landau-Fermi-Dirac (LFD) equation, we have the same conservation laws \eqref{eq:conservation-laws-formal} of mass, momentum, kinetic energy and moment of inertia.
However, the saturation effect described by the Pauli exclusion principle leads us to consider, instead of \eqref{eq:classical-entropy}, the \emph{quantum} entropy
\begin{equation}
	\label{eq:quantum-entropy}
	S_\varepsilon(g) = -\frac{1}{\varepsilon} \intxv \varepsilon g \log (\varepsilon g) + (1-\varepsilon g) \log(1-\varepsilon g),
\end{equation}
which is a decreasing functional in the solutions of the Landau-Fermi-Dirac equation \eqref{eq:lfd}.

For the quantum case, the extra bound \eqref{ineq:pauli-exclusion} implies that the collision kernel is well defined even in the case of a Coulomb-type potential.
Thus, we don't need renomalization to show the existence of a global solution and we can construct a weak solution, in the sense of distributions, without any problems.

This way, the existence result from \cite{sampaio-2024} reads

\begin{definition}
	\label{def-weak-solution}
	Let $f_0 \in L^1_2(\Rxv)$ be such that $0 \leq f_0 \leq \varepsilon^{-1}$.
	A function $f = f(t,x,v)$ in $C((0,\infty); \mathcal{D}'(\Rxv)) \cap L^\infty((0,\infty); L^1_2(\Rxv)$ is called a global weak solution of LFD with initial data $f_0$ and quantum parameter $\varepsilon > 0$ if it satisfies
	\[
		\pdv{f}{t} + v \cdot \nabla_x f
		= \mpdv{( \abar_{ij} f )}{v_i}{v_j}
		+ \pdv{}{v_i} \left[
			\pdv{\abar_{ij}}{v_j} f + \bbar_i f(1 - \varepsilon f)
			\right]
	\]
	in $\mathcal{D}'((0,\infty) \times \Rxv)$, $f(t) \to f_0$ in $\mathcal{D}'(\Rxv)$ and moreover

	1) Pauli exclusion principle:
	\[
		0 \leq f(t) \leq \varepsilon^{-1}.
	\]

	2) Conservation of mass and linear momentum:
	\[
		\iint f(t,x,v) \, dxdv
		= \iint f_0(x,v) \,dxdv,
	\]\[
		\iint f(t,x,v) v_i \, dxdv
		= \iint f_0(x,v) v_i \,dxdv
		\hspace{10pt} \forall i \in \{ 1, \cdots, N \}.
	\]

	3) Decay of kinetic energy and moment of inertia:
	\[
		\iint f(t,x,v) |v|^2 \,dxdv
		\leq \iint f_0(x,v) |v|^2 \,dxdv,
	\]
	\[
		\iint f(t,x,v) |x-tv|^2 \,dxdv
		\leq \iint f_0(x,v) |x-tv|^2 \,dxdv.
	\]
	
	4) Entropy inequality:
	\[
		\iint s(t,x,v)\,dxdv + \int_{0}^{t}\iint d(\tau,x,v)\,dxdvd\tau \leq \iint s(0,x,v)\,dxdv,
	\]
	where
	\[
		s(t,x,v) = \varepsilon f\log (\varepsilon f) + (1- \varepsilon f)\log(1- \varepsilon f)
	\]
	is the quantum entropy and
	\begin{equation}
		\label{dissipation-def}
		d(t,x,v) =
		\int a(v-v_*) f (1-\varepsilon f) f_* (1 - \varepsilon f_*)
		\left|
			\frac{\nabla_v f}{f(1-\varepsilon f)} - \frac{\nabla_{v_*}f_*}{f_*(1-\varepsilon f_*)} 
		\right|^{\otimes2}dv_*,
	\end{equation}
	is the quantum entropy dissipation.
\end{definition}

More precisely, $f$ is a global weak solution of LFD if for every test function $\varphi \in \mathcal{D}((0, \infty) \times \mathbb{R}^N_x \times \mathbb{R}^N_v))$, we have	
\[
	\inttxv f \pdv{\varphi}{t}
	+\inttxv f v_i \pdv{\varphi}{x_i} =\\
	- \inttxv \abar_{ij} f \mpdv{\varphi}{v_i}{v_j}
	- \inttxv \left[
		\pdv{\abar_{ij}}{v_j} f + \bbar_i f(1 - \varepsilon f)
		\right] \pdv{\varphi}{v_i}
\]
and for every $\psi \in \mathcal{D}(\mathbb{R}^N_x \times \mathbb{R}^N_v)$, we have
\[
	\intxv f(t) \psi\; \xrightarrow{t \to 0^+} \intxv f_0 \psi.
\]

\begin{remark}
	\label{rem:entopy-dissipation-definition}
	As is the case for the classical Landau equation, here the lack of regularity a priori in $v$ implies that one should interpret the entropy dissipation \eqref{dissipation-def} in a different way.
	The result in \cite{sampaio-2024} proves that the entropy inequality is valid is we interpret the integral $\int_{0}^{t}\iint d(\tau,x,v)\,dxdvd\tau$ as a notation for the $L^2((0,t) \times \mathbb{R}^N_x \times \mathbb{R}^N_v \times \mathbb{R}^N_{v_*})$ norm of
 	\begin{multline*}
			\Div_v \left[ \sqrt{a(v-v_*)} \sqrt{f_* (1-f_*)} \arcsin \sqrt{f} \right]
			- \Div_{v_*} \left[ \sqrt{a(v-v_*)} \sqrt{f (1-f)} \arcsin \sqrt{f_*} \right]\\
			- \Div_v \left( \sqrt{a(v-v_*)} \right) \sqrt{f_* (1-f_*)} \arcsin \sqrt{f}
			+ \Div_{v_*} \left(\sqrt{a(v-v_*)}\right) \sqrt{f (1-f)} \arcsin \sqrt{f_*},
	\end{multline*}
	where the divergence of a matrix $A$ is defined as $\Div_v A = \sum_{j} \pdv{A_{ij}}{v_j}$.
	The motivation for using the expression \eqref{dissipation-def} is that if $f$ has some regularity in $v$, then the two expressions are equal.
\end{remark}

The existence of such solutions was proven in \cite{sampaio-2024} by a compactness argument reminiscent of the work of Lions and Villani for the Landau equation. 
In this approach, we construct a sequence of equations that adequately approximates the LFD equation and then deduce a compactness theorem for its solutions, which allows us to pass to the limit in the approximations and obtain a solution to the LFD equation.

The technique we will use to prove a semi-classical limit for solutions of LFD depends on being able to pass from a weak formulation of this equation to a renormalized one, and therefore to manipulate products of derivatives of these solutions.
We don't have regularity results for these solutions, but it turns out that it's enough to consider solutions that originate from solutions to approximate, more regular equations, which leads us to the definition of \emph{suitable} weak solutions of the LFD equation.

\begin{definition}
	\label{def:suitable-weak-solution}
	A weak solution $f$ to the LFD is called a \emph{suitable weak solution} if there exists a sequence $f_m \in L^2((0,T) \times \Rx; H^1(\Rv))$ of solutions to
	\begin{equation}
		\label{regularized}
		\begin{cases}
			\displaystyle
			\pdv{f_m}{t} + v \cdot \nabla_x f_m =
			\pdv{}{v_j} \left[
				\abar^m_{ij} \pdv{f_m}{v_i} - \bbar^m_j f_m (1 - \varepsilon f_m)
				\right]\\
			f_m|_{t=0} = f_{0,m}
		\end{cases}
	\end{equation}
	such that $f_m \to f$ almost everywhere in $(0,\infty) \times \Rxv$ and
	
	\begin{enumerate}[i., font=\bfseries]
	\item \textbf{Convergence of the coefficients:}
	$\abar^m, \Div_v \abar^m, \bbar^m, \Div_v \bbar^m$ are $L^\infty_{loc}((0,T) \times \Rxv)$ functions such that
	\begin{alignat*}{2}
		\abar^m &\to \abar \hspace{1cm}
		&\bbar^m &\to \bbar\\
		\Div_v \abar^m &\to \Div_v \abar \hspace{1cm}
		\Div_v &\bbar^m &\to \Div_v \bbar
	\end{alignat*}
	in $L^1_{loc}((0,T) \times \Rxv)$.
	
	\item \textbf{Uniform bound:}
	There exists a $C > 0$ such that, for every $m \in \mathbb{N}$,
	\begin{equation}
		\label{eq:approx-uniform-bound}
		\int_0^T \iint_{\Rxv} f_m (1 + |v|^2 + |x-tv|^2) \; dxdv dt \leq C
		\text{ and }
		0 \leq f_m \leq \varepsilon^{-1}.
	\end{equation}
	\end{enumerate}
	We call the sequence of approximating functions $(f_m)_m$ an \emph{approximating scheme}.
	The matrix $\abar^m$ is called the diffusion matrix and it is sometimes useful to explicitly state which sequence of diffusion matrices we're using.
	In this case, we say that $f_m \to f$ is an approximation scheme with diffusion $(\abar^m)_m$.
\end{definition}

Note that, by Vitali's convergence theorem, the bounds \eqref{eq:approx-uniform-bound} imply that the convergence $f_m \to f$ holds in $L^1_{loc}((0,\infty); L^1(\Rxv))$.
Also, the solutions constructed in \cite{sampaio-2024} are \emph{suitable} weak solutions and if we had uniqueness for the weak solutions of LFD then all weak solutions are suitable.

The main theorem reads as follows
\begin{theorem}
	Let $\varepsilon_n \to 0$.
	For each $n$, let $f^n$ be a suitable solution of the LFD equation with quantum parameter $\varepsilon_n$ and initial data $0 \leq f^n_0 \leq \varepsilon^{-1}_n$.
	For each $n$, let $f^n_m \to f^n$ be an approximating scheme with diffusion $(\abar^{n,m})_m$.

	Suppose the diffusion matrices satisfy
	\begin{equation}
		\label{eq:abar-m-elliptic}
		\abar^{n,m} \geq a_m *_v (f^n_m (1-\varepsilon f^n_m)), 
		\text{ where } a_m(z) = \Gamma_m(|z|) \left( I - \frac{z \otimes z}{|z|^2} \right),
	\end{equation}
	with $\Gamma_m$ such that $\Gamma_m(|z|) \to \Gamma(|z|)$ in $L^1_{loc}(\mathbb{R}^N)$ and for every $R > 0$ there exists some $K_R > 0$ such that
	\[
		\Gamma_m(|z|) \geq K_R
		\;\; \forall |z| \leq R.
	\]

	If the sequence of initial data $(f^n_0)_n$ is such that
	\begin{equation}
		\label{eq:initial-data-convergence}
		\iint_{\Rxv} f^n_0 (1 + |x|^2 + |v|^2 + \log f^n_0) \, dxdv
		\to
		\iint_{\Rxv} f_0 (1 + |x|^2 + |v|^2 + \log f_0) \, dxdv
	\end{equation}
	then, up to a subsequence, the sequence $f^n$ converges to a renormalized solution of Landau equation with defect measure.
\end{theorem}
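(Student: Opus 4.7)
The plan is to adapt the Lions/Villani compactness theory for the classical Landau equation to the LFD setting, combining it with a diagonal argument over the approximating schemes $(f^n_m)_m$ that witness the \emph{suitable} character of each $f^n$. As $\varepsilon_n\to 0$ the Fermi-Dirac corrections $(1-\varepsilon_n f^n)$ should disappear in the limit, turning a renormalized LFD equation into the renormalized Landau equation with a defect measure in the sense of Definition \ref{def-renormalized-landau}.

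The first step is to obtain uniform-in-$n$ control. A standard manipulation of the quantum entropy inequality in Definition \ref{def-weak-solution}, combined with the pointwise bound $f^n\leq\varepsilon_n^{-1}$ and the convergence \eqref{eq:initial-data-convergence} of initial data, yields uniform bounds on $(f^n)$ in $L^\infty((0,\infty);L^1_2(\Rxv))$ as well as uniform control of the classical entropy $\iint f^n|\log f^n|$ and of the quantum entropy dissipation. The latter, interpreted in the $\arcsin$-sense of Remark \ref{rem:entopy-dissipation-definition}, passes in the limit to the Landau dissipation of Remark \ref{rem:entropy-dissip-classical}, using that $(1-\varepsilon_n f^n)\to 1$ a.e.\ and that $\arcsin\sqrt{\varepsilon_n f^n}/\sqrt{\varepsilon_n}\to\sqrt{f}$ pointwise. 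Strong compactness of $(f^n)$ is then obtained from tightness in $v$ (from moments), velocity averaging in $x$ (from the divergence form of the right-hand side), and equicontinuity in $t$ (from the equation). Extracting a subsequence, $f^n\to f$ in $L^1_{loc}((0,\infty)\times\Rxv)$ and almost everywhere, so $\abar^n\to\abar$ and $\bbar^n\to\bbar$ in $L^1_{loc}$.

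Next, I would renormalize. For fixed $n$, the chain rule is admissible on the regular approximants $f^n_m\in L^2((0,T)\times\Rx;H^1(\Rv))$, so multiplying \eqref{regularized} by $\beta_\delta'(f^n_m)$ yields a renormalized LFD equation for $\beta_\delta(f^n_m)$ which differs from \eqref{landau-renorm} only by the extra Fermi-Dirac factor $(1-\varepsilon_n f^n_m)$ in the $\bbar^{n,m}_i f^n_m$ term. Passing $m\to\infty$ using the hypotheses of Definition \ref{def:suitable-weak-solution} produces a renormalized LFD equation for $\beta_\delta(f^n)$ with the quadratic term interpreted in the square-root sense of Remark \ref{rem:quadratic-term}. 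Sending now $n\to\infty$, the linear terms converge by the coefficient convergence above; the Fermi-Dirac correction $\varepsilon_n f^n(\cdots)$ vanishes thanks to $\varepsilon_n f^n\to 0$ a.e.\ with $\varepsilon_n f^n\in[0,1]$ and dominated convergence. The quadratic term is the source of the defect measure: the entropy dissipation bound together with the ellipticity \eqref{eq:abar-m-elliptic} allows one to extract a weak limit for the square-rooted quantity $\sqrt{a_m(v-v_*)}\sqrt{f^n_{m,*}(1-\varepsilon_n f^n_{m,*})}\nabla_v\gamma_\delta(f^n_m)$, and the standard strong-weak lower semicontinuity gap identifies the limit as the Landau quadratic term plus a nonnegative defect measure $\mu_\delta$. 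Conservation of mass and momentum, the decay properties, and the entropy inequality then pass to the limit by weak lower semicontinuity in the usual way.

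The hard part is executing the two limits $m\to\infty$ and $n\to\infty$ coherently. One must choose a diagonal sequence $m=m(n)$ along which all relevant estimates on $f^n_{m(n)}$ --- classical entropy, entropy dissipation, convergence of $\abar^{n,m(n)}$ and $\bbar^{n,m(n)}$ --- survive, and along which the ellipticity hypothesis \eqref{eq:abar-m-elliptic} is preserved so that in the limit $\abar\geq a*_v f$, which is precisely what makes the defect measure nonnegative. Verifying that the resulting limit $f$ and the measure $\mu_\delta$ are independent of the diagonal extraction and that $\mu_\delta$ satisfies the local boundedness required by Definition \ref{def-renormalized-landau} is the delicate technical step, mirroring the diagonal construction used in \cite{sampaio-2024} for the existence of suitable weak solutions to LFD.
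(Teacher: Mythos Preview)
Your overall outline---renormalize, apply velocity averaging, pass to the limit with a defect measure on the quadratic term---is the right skeleton, and your last paragraph correctly identifies the diagonal extraction as the crux. But the middle two paragraphs contain a structural gap that the paper's argument is specifically designed to avoid.

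You propose two sequential limits: first $m\to\infty$ for fixed $n$, obtaining a renormalized LFD equation for $\beta_\delta(f^n)$, then $n\to\infty$. The first of these limits is not available. The paper notes explicitly (opening of Section~4) that it is \emph{not known} whether weak solutions $f^n$ of LFD satisfy a renormalized formulation. The obstruction is the quadratic term $-\beta_\delta''(f^n_m)\,\abar^{n,m}_{ij}\,\partial_{v_i}f^n_m\,\partial_{v_j}f^n_m$: passing $m\to\infty$ already produces a defect measure (there is no strong compactness of $\nabla_v f^n_m$ available for fixed $n$), and you would then have to take a further limit in $n$ of a sequence of defect measures whose dependence on $n$ you do not control. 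Relatedly, your claim that velocity averaging gives strong compactness of $(f^n)_n$ directly fails: the only uniform-in-$n$ bound is $L^1$ (the pointwise bound $f^n\le\varepsilon_n^{-1}$ blows up), and the averaging lemma used here requires an $L^p_{loc}$ bound with $p>1$. One must renormalize \emph{before} averaging, and renormalization requires the $H^1_v$ regularity that only the approximants $f^n_m$ have---not $f^n$.

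The paper's resolution is never to separate the limits. All the analysis---the renormalized formulation (Proposition~1 and Lemma~2), the $L^1_{loc}$ bound on the quadratic term (Lemma~4), velocity averaging (Lemma~5), the ellipticity estimate (Lemma~6), and the resulting strong $L^1$ compactness (Lemma~3)---is carried out directly on a diagonal sequence $f^n_{m_n}$, which retains $H^1_v$ regularity so that the renormalized equation holds \emph{without} defect. One first arranges $\|f^n-f^n_{m_n}\|_{L^1}\le 1/n$ and analogous closeness of the coefficients, then proves strong compactness of $(f^n_{m_n})_n$; the convergence of $(f^n)_n$ follows ``in tow''. The defect measure appears exactly once, in the single limit $n\to\infty$ along the diagonal, where the explicit convolution structure \eqref{eq:abar-m-elliptic} lets one write the quadratic term as $\int_{v_*}|\sqrt{a_{m_n}}\sqrt{f^n_{n,*}(1-\varepsilon_n f^n_{n,*})}\,\nabla_v\gamma_\delta(f^n_n)|^2$ and pass to the limit by weak $L^2$ lower semicontinuity.
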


Hypothesis \eqref{eq:abar-m-elliptic} arises as a requirement of the techniques we use and also as a result of the fact that we would like to write the quadratic term of the renormalized Landau form in the “natural” form \eqref{eq:formula-for-quad-term}.
However, this convolution structure is not essential for demonstrating most of the results and if we are willing to offer an alternative, albeit more abstract, interpretation for the quadratic term, we can alleviate the requirement \eqref{eq:abar-m-elliptic}, which specifies the way in which we approximate the diffusion matrix $\abar$, in favor of a slightly more general ellipticity requirement.

\begin{theorem}
	Let $\varepsilon_n \to 0$.
	For each $\varepsilon_n$, let $f^n$ be a suitable solution of the LFD equation with quantum parameter $\varepsilon_n$ and initial data $f^n_0$.
	For each $n$, let $f^n_m \to f^n$ be an approximating scheme with diffusion $(\abar^{n,m})_m$.
	
	If the diffusion matrices are such that $\pdv{\abar^{n,m}}{v_k} \in L^\infty_{loc}((0,T) \times \Rxv)$ converges to $\pdv{\abar^n}{v_k}$ in $L^1_{loc}((0,T) \times \Rxv)$, for every $i,j,k = 1, \dots, N$ and such that for every $R, R'>0$ and $\eta \in \mathbb{R}^N$,
	\begin{equation}
			\label{ineq-uniform-ellip}
			\abar^{n,m}_{ij} \eta_i \eta_j \geq \int_{|v_*| \leq R'} \nu
			\left[ 1 - \biggl( \frac{v-v_*}{|v-v_*|} \cdot \frac{\eta}{|\eta|} \biggr)^2 \right] |\eta|^2 f^n_{m,*} (1-f^n_{m,*}) \,dv_*,
			\;\forall |v| \leq R,
		\end{equation}
	then we can write the quadratic term in Landau equation as
	\[
		- \beta''(f) \abar_{ij} \pdv{f}{v_i} \pdv{f}{v_j}
		= \liminf_{k \to \infty} \left| S^k(\abar) \nabla_v \gamma(f) \right|^2,
	\]
	where the term inside the square is an $L^2_{loc}((0,T) \times \Rxv)$ vector function, defined in the sense of distributions as
	\[
		S^k(\abar) \nabla_v \gamma(f)
		\equiv \Div_v [S^k(\abar) \gamma(f)]
		- \Div_v [S^k(\abar)]  \gamma(f),
	\]
	and $S^k$ is some smooth approximation of the matrix square root.
\end{theorem}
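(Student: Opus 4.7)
The plan is to mirror the proof of the previous theorem, which produces a renormalized Landau solution with defect measure and interprets the quadratic term in the distributional form of Remark \ref{rem:quadratic-term}, while replacing the explicit square-root structure $\sqrt{a(v-v_*)}\sqrt{f_*(1-\varepsilon f_*)}$ by a smooth approximation $S^k(\abar)$ of the matrix square root of $\abar$ itself. The gain in abstraction is paid for by having to interpret the quadratic term as a $\liminf_k$ rather than as a pointwise $L^2$ norm.

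First I would fix a family of smooth matrix functions $S^k \colon \mathrm{Sym}^+(N) \to \mathrm{Sym}^+(N)$, each with bounded derivatives on compacts, such that $S^k(A) \to A^{1/2}$ locally uniformly and $(S^k(A))^2 \leq A$; these can be built by standard mollification and Lipschitz truncation of the square root. Because each derivative $\pdv{\abar^{n,m}_{ij}}{v_\ell}$ lies in $L^\infty_{\mathrm{loc}}$ with $L^1_{\mathrm{loc}}$ convergence, the chain rule transfers these bounds to the composition: for each fixed $k$, $S^k(\abar^{n,m})$ and $\Div_v S^k(\abar^{n,m})$ are in $L^\infty_{\mathrm{loc}}$ uniformly in $m$ and converge in $L^1_{\mathrm{loc}}$ to $S^k(\abar^n)$ and $\Div_v S^k(\abar^n)$ respectively. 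This legitimizes the formula
\[
    S^k(\abar) \nabla_v \gamma_\delta(f)
    = \Div_v [ S^k(\abar) \gamma_\delta(f) ]
      - \Div_v [ S^k(\abar) ] \gamma_\delta(f)
\]
as an $L^2_{\mathrm{loc}}$ vector function, independent of the approximating sequence, and provides stability of this object under $m \to \infty$ and $n \to \infty$ for each fixed $k$.

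Next I would extract quantitative bounds from entropy dissipation. The standard computation for the regularized equation \eqref{regularized}, together with the bounds \eqref{eq:approx-uniform-bound}, yields a uniform $L^1$ bound on $\abar^{n,m}_{ij} \pdv{\gamma_\delta(f^n_m)}{v_i} \pdv{\gamma_\delta(f^n_m)}{v_j}$; here the ellipticity \eqref{ineq-uniform-ellip} plays the role that the convolution structure played in the previous theorem, ensuring that this quadratic form is comparable, on compacts, to the entropy-controlled dissipation. Since $(S^k(A))^2 \leq A$, this furnishes a uniform $L^2_{\mathrm{loc}}$ bound on $S^k(\abar^{n,m}) \nabla_v \gamma_\delta(f^n_m)$ independent of $m$ and $k$. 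Weak $L^2$ compactness then allows passage to the limit $m \to \infty$ with $(n,k)$ fixed, and a diagonal argument, combined with the compactness results already used in the proof of the previous theorem (almost everywhere convergence of $f^n$ and strong $L^1_{\mathrm{loc}}$ convergence of the coefficients $\abar^n, \bbar^n$), produces the limit $f$ together with a weakly defined $S^k(\abar) \nabla_v \gamma_\delta(f) \in L^2_{\mathrm{loc}}$.

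Finally, the identification of the quadratic term follows by sending $k \to \infty$: weak lower semicontinuity of the $L^2$ norm together with Fatou's lemma shows that $\liminf_k |S^k(\abar) \nabla_v \gamma_\delta(f)|^2$ is an $L^1_{\mathrm{loc}}$ function which, when $f$ is smooth, agrees with the pointwise quadratic form $\abar_{ij}\pdv{\gamma_\delta(f)}{v_i}\pdv{\gamma_\delta(f)}{v_j}$, justifying its use as the quadratic term in \eqref{eq:landau-renormalized}. The other steps from the proof of the previous theorem transfer without modification, the only difference being that any slack between $S^k(\abar)^2$ and $\abar$ is absorbed into the nonnegative defect measure $\mu_\delta$. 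The main obstacle is this last step: because the matrix square root is only continuous (and not Lipschitz) near degenerate matrices, one cannot hope for strong convergence in $k$, and one must verify that the $\liminf$ produces a nonnegative excess that is compatible with the Villani-type defect measure — which is precisely why the theorem is formulated with a $\liminf$ rather than an equality.
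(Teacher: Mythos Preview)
Your proposal follows essentially the paper's own argument: introduce smooth truncations $S^k$ of the matrix square root with $(S^k)^2\le \mathrm{Id}$, use the assumed $L^1_{\mathrm{loc}}$ convergence of $\abar^{n,m}$ and of its $v$-derivatives together with the Lipschitz regularity of $S^k$ to pass $S^k(\abar^{n,n})\nabla_v\gamma_\delta(f^n_n)$ to the limit in $n$ via the product-rule interpretation, upgrade this to weak $L^2_{\mathrm{loc}}$ convergence using the uniform $L^1_{\mathrm{loc}}$ bound on $\abar^{n,n}_{ij}\partial_{v_i}\gamma_\delta(f^n_n)\partial_{v_j}\gamma_\delta(f^n_n)$, and finally send $k\to\infty$ absorbing the slack into the defect measure.

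One correction on the bookkeeping: the uniform $L^1_{\mathrm{loc}}$ bound on the quadratic form does \emph{not} come from entropy dissipation, nor does it require the ellipticity \eqref{ineq-uniform-ellip}; it is Lemma~\ref{lem-aij-v-bounded}, obtained by testing the renormalized equation \eqref{lfd-renormalized} against a nonnegative cutoff and using only the $L^1_{\mathrm{loc}}$ boundedness of the coefficients. The ellipticity hypothesis \eqref{ineq-uniform-ellip} enters exactly where it did in Theorem~1, namely in the strong compactness of the diagonal sequence via Lemmas~\ref{lemma-ellipticity} and~\ref{lem:ae-comp}, and plays no role in bounding or identifying the quadratic term itself.
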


We remark that the statement \eqref{ineq-uniform-ellip}, although very close, is a weaker requirement than \eqref{eq:abar-m-elliptic}.

The technique we will use to prove Theorems 1 and 2 consists of using the extra regularity provided by the $(f^n_m)_n$ approximations to pass the equation into a renormalized form and then study the compactness of the diagonal of the scheme
\begin{equation}
	\label{diagram:diagonal}
	\begin{tikzpicture}
	\node (fnm) at (0,1.5) {$f^n_m$};
	\node (fn)  at (2,1.5) {$f^n$};
	\node (f)   at (2,0) {$f$};
	\draw [->] (fnm) -- node[midway,below,sloped]{$n,m$} (f);
	\draw [->] (fnm) -- node[midway,above]{$m$} (fn);
	\draw [dashed,->] (fn) -- node[midway,xshift=7pt]{$n$} (f);
	\end{tikzpicture}
\end{equation}

That is, show that there exists a sequence $m_n \to \infty$ such that $(f^n_{m_n})_n$ is strongly compact in $L^1$.
Since the approximation schemes $(f^n_m)$ are always “close” to $f^n$, we can then show the convergence of $f^n$ “in tow” to the same limit, which we will show is a renormalized solution of the Landau equation with defect measure.

In order to do this, in Section 3 we start by proving that any time we have such an arrangement, there exists some diagonal that is weakly compact.
We then proceed in Section 4 to show that approximate solutions satisfy in fact a renormalized formulation of LFD, which we will use in Section 5 to prove that our diagonal is in fact strongly compact.
Section 6 is then dedicated to using this compactness to pass the renormalized formulation in the limit, thus proving Theorem 1, and finally in Section 7 we prove Theorem 2 by studying approximations of the matrix square root.

\section{Weak compactness of diagonal sequences}
In the existence theory for the Boltzmann, Landau and Landau-Fermi-Dirac equations, the conservation laws obeyed by these equations automatically imply their solutions are weakly compact in $L^1_{loc}((0,\infty), L^1(\Rxv))$.

In our case, however, the approach needs to be slightly different, since our approximation schemes don't necessarily satisfy conservation laws.
In fact, this is neither assumed about the schemes nor can it be deduced from the approximating equations themselves, since we don't even have the convolution structure of $\abar$, $\bbar$ and $\cbar$ for these equations.
Despite this, we can construct a diagonal sequence that is weakly compact in $L^1$, and the main result of this section then reads
\begin{lemma}
	\label{lem:diag-weak-convergence}
	Let $f^n$ be a sequence of suitable weak solutions to the LFD equation \eqref{eq:lfd}, as in Definition \ref{def:suitable-weak-solution} and for each $n$, consider $f^n_m \to f^n$ an approximation scheme for $f^n$.
	
	If there exists a $C > 0$ such that
	\[
		\iint_{\Rxv} f^n_0 (1 + |x|^2 + |v|^2 + \log f^n_0) \, dxdv, \leq C
	\]
	then there exists a sequence $(M_n)_n$ such that if $m_n \geq M_n$ for every $n \in \mathbb{N}$, then the diagonal sequence $(f^n_{m_n})_n$ is weakly compact in $L^1_{loc}((0,\infty); L^1(\Rxv))$.
\end{lemma}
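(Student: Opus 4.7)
The approach is to first prove weak $L^1_{loc}$ compactness of $(f^n)_n$ itself, and then bootstrap to the diagonal using the fact (noted after Definition 3) that for each fixed $n$, $f^n_m \to f^n$ in $L^1_{loc}((0,\infty); L^1(\Rxv))$ as $m \to \infty$. As a first step, from the hypothesis $\iint f^n_0(1 + |x|^2 + |v|^2 + \log f^n_0) \leq C$ together with the standard splitting argument against a Gaussian reference density (which gives $\iint f_0 (\log f_0)_- \leq C'(1 + \iint f_0(|x|^2 + |v|^2))$), I extract uniform-in-$n$ bounds on $\iint f^n_0$, $\iint f^n_0(|x|^2+|v|^2)$, and $\iint f^n_0 |\log f^n_0|$. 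Since each $f^n$ is a suitable weak solution of LFD, mass conservation together with the decay of kinetic energy and moment of inertia from Definition 2 propagate these to uniform-in-$n$, locally-uniform-in-$t$ bounds on $\iint f^n(t)(1 + |v|^2 + |x-tv|^2)$.

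\textbf{Main obstacle: entropy propagation.} Obtaining an $\varepsilon_n$-independent bound on the \emph{classical} entropy of $f^n(t)$ from the \emph{quantum} entropy inequality $\iint s^n(t) \leq \iint s^n(0)$ is the crux of the argument, since $s^n = \varepsilon_n f^n \log(\varepsilon_n f^n) + (1-\varepsilon_n f^n)\log(1-\varepsilon_n f^n)$ contains a priori divergent $\log \varepsilon_n$ contributions. Using the Taylor identity $(1-u)\log(1-u) = -u + R(u)$, with $0 \leq R(u) \leq u^2$ on $[0,1]$, and dividing by $\varepsilon_n$, one rewrites
\[
	\frac{s^n}{\varepsilon_n} = f^n \log f^n + (\log \varepsilon_n - 1)\, f^n + \frac{1}{\varepsilon_n} R(\varepsilon_n f^n).
\]
Mass conservation cancels the $(\log \varepsilon_n - 1) f^n$ contribution between times $0$ and $t$, and the Pauli bound $\varepsilon_n f^n \leq 1$ yields $\varepsilon_n^{-1} R(\varepsilon_n f^n) \leq \varepsilon_n (f^n)^2 \leq f^n$, which is uniformly integrable. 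This gives $\iint f^n(t) \log f^n(t) \leq \iint f^n_0 \log f^n_0 + C$ uniformly in $n$ and $t$. Combined with the second-moment bounds from the previous paragraph, de la Vall\'ee-Poussin yields equi-integrability of $\{f^n(t) : n \in \mathbb{N},\, t \in [0,T]\}$ on every compact subset of $\Rxv$; hence by Dunford-Pettis, $(f^n)_n$ is weakly compact in $L^1_{loc}((0,\infty); L^1(\Rxv))$.

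\textbf{Diagonal extraction.} For each $n$, the convergence $f^n_m \to f^n$ in $L^1_{loc}$ allows me to pick $M_n \in \mathbb{N}$ so that $\|f^n_m - f^n\|_{L^1([0,n]\times B_n\times B_n)} \leq 1/n$ for every $m \geq M_n$. For any choice $m_n \geq M_n$, write $f^n_{m_n} = f^n + r^n$, where $r^n \to 0$ in $L^1_{loc}$ and hence weakly. Given any subsequence of $(f^n_{m_n})_n$, the weak compactness established above supplies a weakly $L^1_{loc}$ convergent sub-subsequence of the corresponding $(f^n)_n$; adding the strongly vanishing correction $r^n$ gives weak $L^1_{loc}$ convergence of the matching sub-subsequence of $(f^n_{m_n})_n$. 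This proves the claimed weak $L^1_{loc}$ compactness of the diagonal.
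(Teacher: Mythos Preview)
Your proof is correct and takes a genuinely different, somewhat more economical route than the paper.

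The paper works directly with the approximations $f^n_m$: it shows that for each $T$ one can choose $M_{n,T}$ so that, for $m \geq M_{n,T}$,
\[
\int_0^T \iint f^n_m \bigl(1 + |v| + |x-tv| + |\log f^n_m|\bigr)\,dxdvdt \leq C_T,
\]
and then applies Dunford--Pettis to the diagonal. This requires an extra step: proving $f^n_m |\log f^n_m| \to f^n |\log f^n|$ in $L^1$ for each fixed $n$, which the paper does by exploiting the Pauli bound $f^n_m \leq \varepsilon_n^{-1}$ to dominate the entropy integrand. Only after that does the paper invoke the quantum-to-classical entropy estimate (your ``main obstacle''), which you both handle in the same way.

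Your approach bypasses the entropy convergence for the approximations entirely: you establish weak $L^1_{\mathrm{loc}}$ compactness of $(f^n)_n$ directly from the conservation laws and the quantum entropy inequality, then observe that adding a strongly vanishing $r^n = f^n_{m_n} - f^n$ preserves weak sequential compactness. This is cleaner, and it uses less structure on the approximation scheme (you never touch the bound $f^n_m \leq \varepsilon_n^{-1}$).

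Two small points to tighten. First, since the target space is $L^1_{\mathrm{loc}}((0,\infty); L^1(\Rxv))$, you should pick $M_n$ so that $\|f^n_m - f^n\|_{L^1((0,n) \times \Rxv)} \leq 1/n$ (global in $x,v$), not just on $B_n \times B_n$; the paper records after Definition~3 that $f^n_m \to f^n$ in this stronger topology, so this is available. Second, ``weakly compact in $L^1_{\mathrm{loc}}((0,\infty); L^1(\Rxv))$'' requires a single subsequence working for every $T$, which needs a short diagonal extraction across $T=1,2,3,\dots$; the paper spells this out explicitly in its Part~III.
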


\begin{proof}

The main idea is that although we don't know if the approximate solutions obey conservation laws, the $f^n$ solutions definitely do, so we choose a diagonal sequence $f^n_{m_n}$ that is close enough to the $f^n$ sequence.

Note that the uniform bound implies weak compactness in $L^1$ by a simple application of the Dunford-Pettis theorem.

\proofpart{I}{Bounding the approximating terms by their limits}
We'll start by showing that for every $T > 0$ and $n \in \mathbb{N}$ there is a $C_T > 0$ and an $M_{n,T} > 0$ such that
\begin{equation}
	\label{ineq:fnm-weighted-bounds-by-fn}
	\int_0^T \intxv f^n_m (1 + |v| + |x-tv| + |\log f^n_m|) \leq C_T
	+ \int_0^T \intxv f^n \log f^n
\end{equation}
for every $m \geq M_{n,T}$.

Indeed for every $n$ the sequence $(f^n_m (1 + |v|^2 + |x-tv|^2))_m$ is uniformly bounded in $L^1((0,T); L^1(\Rxv))$ and $f^n_m \to f^n$ in $L^1((0,T); L^1(\Rxv))$, hence it follows by interpolation that
\begin{equation}
	\label{fnm-convergence}
	f^n_m (1 + |v| + |x-tv|) \to f^n (1 + |v| + |x-tv|)
	\text{ in } L^1((0,T); L^1(\Rxv)),
\end{equation}
thus there exists some $M_{n,T}>0$ such that
\begin{align*}
	\int_0^T \intxv f^n_m (1 + |v| + |x-tv|)
	&\leq \int_0^T \intxv f^n (1 + |v| + |x-tv|) + 1\\
	&\leq 2\int_0^T \intxv f^n (1 + |v|^2 + |x-tv|^2) + 1.
\end{align*}
for every $m \geq M_{n,T}$.
Then, since $f^n$ is a solution of \eqref{eq:lfd} as in Definition \ref{def-weak-solution}, we have that
\[
	\int_0^T \intxv f^n (1 + |v|^2 + |x-tv|^2)
	\leq T \intxv f^n_0 (1 + |v|^2 + |x|^2),
\]
which then achieves the first part of inequality \eqref{ineq:fnm-weighted-bounds-by-fn}.

For the entropy part notice that, for any function $h = h(t,x,v)$ we have
\[
	h |\log h| 
	= h \log h - 2 h \log h \mathbbm{1}_{\{ h \leq 1 \}}
\]

We can bound the last term as
\begin{align*}
	- h \log h \mathbbm{1}_{\{ h \leq 1 \}}
	&= h \log (1/h) \mathbbm{1}_{\{h \leq \exp(-|x| - |v|)\}}
	+ h \log (1/h) \mathbbm{1}_{\{ h > \exp(-|x| - |v|)\}}\\
	&\leq h \log (1/h) \mathbbm{1}_{\{h \leq \exp(-|x| - |v|)\}}
	+ h (|x| + |v|)
\end{align*}
and since $x \log (1/x) \leq \sqrt{x}$, for every $x \geq 0$, we conclude that
\[
	- h \log h \mathbbm{1}_{\{ h \leq 1 \}}
	\leq \exp\left( - (|x| + |v|)/2 \right)
	+ h (|x| + |v|),
\]
thus
\begin{equation}
	\label{ineq:h-logh}
	h |\log h|
	\leq 2\exp\left[ - \frac{1}{2} \big( |x| + |v| \big) \right] 
		+ h \log h 
		+ 2h \big( |x| + |v| \big).
\end{equation}
Then choosing $h(t,x',v) = f^n(t,x'+tv,v)$ and letting $x = x'+tv$ we have that
\begin{multline}
	\label{eq:fn-logfn}
	f^n(t,x,v) |\log f^n(t,x,v)|
	\leq 2\exp\left[ - \frac{1}{2} \big( |x-tv| + |v| \big) \right]
		+ f^n(t,x,v) \log f^n(t,x,v)\\
		+ 2f^n(t,x,v) \big( |x-tv| + |v| \big).
\end{multline}
for every $(t,x,v) \in (0,T) \times \Rxv$.
So in particular, for $0 \leq f^n_m \leq \varepsilon_n^{-1}$,
\[
	f^n_m |\log f^n_m|
		\leq 2\exp\left[ - \frac{1}{2} \big( |x-tv| + |v| \big) \right]
			+ (2 + \varepsilon_n^{-1})
			f^n_m \big( 1 + |x-tv| + |v| \big).
\]
hence from \eqref{fnm-convergence} it follows that $f^n_m | \log f^n_m | \to f^n | \log f^n |$ in $ L^1((0,T); L^1(\Rxv))$, for each $n$ by dominated convergence.
There exists some $M_T$ such that
\[
	\int_0^T \intxv f^n_m |\log f^n_m|
	\leq \int_0^T \intxv f^n |\log f^n| + 1
\]
for every $m \geq M_{n,T}$.
Then, applying inequality \eqref{eq:fn-logfn} leads
\[
	\int_0^T \intxv f^n_m |\log f^n_m|
	\leq C_N + C_m \int_0^T \intxv f^n \log f^n
\]
which implies the inequality that we wanted.

Now, since $f^n$ is a solution of \eqref{eq:lfd} as in Definition \ref{def-weak-solution}, we have that
\[
	\int_0^T \intxv f^n (1 + |v|^2 + |x-tv|^2)
	\leq T \intxv f^n_0 (1 + |v|^2 + |x|^2),
\]
which then proves inequality \eqref{ineq:fnm-weighted-bounds-by-fn}.

\proofpart{II}{Entropy bound}

We now pass to the entropy part of the proposition.
A remarkable property of the functionals \eqref{eq:quantum-entropy} is that although the LFD equation \eqref{eq:lfd} formally becomes the Landau equation \eqref{eq:landau} in the $\varepsilon \to 0$ limit, we don't have formal convergence from the quantum entropy to the classical entropy.

Indeed, any function of the form $f^\varepsilon(t,x,v) = \varepsilon^{-1} \mathbbm{1}_{A}(x-tv,v)$ is a weak solution to \eqref{eq:lfd} that conserves mass and kinetic energy and is such that $S_\varepsilon(f^\varepsilon) = 0$.
But notice that $H(f^\varepsilon) \to \infty$, which implies that we cannot control the classical entropy of a semi-classical limit just by an uniform control over the quantum entropies.

Thus, we see that not every semi-classical limit of LFD solutions gives rise to a physical result with only a uniform estimate on the quantum entropy, and another estimate must then be found.
Let us then show that 
\begin{equation}
	\label{ineq-entropy-proof-1}
	\intxv f^n \log f^n \leq \intxv f^n_0 \log f^n_0 + \intxv f^n_0.
\end{equation}

Since $0 \leq f^n_0 \leq \varepsilon^{-1}$, we have that
\begin{align*}
	\intxv f^n_0 \log f^n_0
	&\geq \frac{1}{\varepsilon} \intxv \varepsilon f^n_0 \log f^n_0 + (1-\varepsilon f^n_0) \log(1-\varepsilon f^n_0)\\
	&= S_\varepsilon(f^n_0) - \intxv f^n_0 \log(\varepsilon)\\
	&\geq S_\varepsilon(f(t)) - \intxv f^n_0 \log(\varepsilon)
\end{align*}

Now, using that for every $0 \leq x \leq \varepsilon^{-1}$, we have
\[
	\frac{1}{\varepsilon} \left[ \varepsilon x \log(\varepsilon x) + (1-\varepsilon x) \log(1-\varepsilon x) \right]
	\geq x \log x + x \log \varepsilon - x,
\]
it follows that
\[
	\intxv f^n_0 \log f^n_0 
	\geq \intxv f^n \log f^n 
	+ \intxv f^n \log(\varepsilon)
	- \intxv f^n
	- \intxv f^n_0 \log(\varepsilon),
\]
and then \eqref{ineq-entropy-proof-1} follows from the conservation of mass of $f^n$.

\proofpart{III}{Weak compactness}
	Now, applying \eqref{ineq:fnm-weighted-bounds-by-fn} together with \eqref{ineq-entropy-proof-1} we have that for all $T>0$ and $n \in \mathbb{N}$ there exists a $C_T>0$ and an $M_{n,T}>0$ such that
	\[
		\int_0^T \intxv f^n_m (1 + |v| + |x-tv| + |\log f^n_m|) \leq C_T
	\]
        for all $m \geq M_{n,T}$, which is indeed the first part of the statement.
        In this final step we show that the bound we have shown indeed implies the weak compactness from the statement.

        Let $N_n = \max_{T\in\{1, \dots, n\}} M_{n,T}$ and suppose that the diagonal sequence $(f^n_{m_n})_n$ satisfies $m_n \geq N_n$ for every $n \in \mathbb{N}$.
        For every $n \geq 1$ we have that $m_n \geq N_n \geq M_{n,1}$ hence
        \begin{equation} \label{eq:fnmn-weights-1}
                \int_0^1 \intxv f^n_{m_n} (1 + |v|^2 + |x-tv|^2 + |\log f^n_{m_n}|) \leq C_1.
        \end{equation}
        Let us write $g^n$ instead of $f^n_{m_n}$.
        The bound \eqref{eq:fnmn-weights-1} gives us, from the Dunford-Pettis theorem, that there exists a sequence $k_1(n)$ such that
	\[
		g^{k_1(n)} \weakto g^1 \text{ in } L^1((0,1) \times \Rxv).
	\]
        Next, for each $n \geq 2$ we have that $m_n \geq N_n \geq M_{n,2}$ and hence
        \[
                \int_0^2 \intxv f^n_{m_n} (1 + |v|^2 + |x-tv|^2 + |\log f^n_{m_n}|) \leq C_2
        \]
        and similarly this implies there exists a $k_2(n)$, subsequence of $k_1(n)$, such that
	\[
		g^{k_2(n)} \weakto g^2 \text{ in } L^1((0,2) \times \Rxv)
	\]
	and from uniqueness of the weak limit, we have that $g^2 = g^1$ a.e. in $(0,1) \times \Rxv$.

	We repeat this argument, taking subsequences of subsequences to construct, for every $m$, a sequence $(g^{k_m(n)})_n$ that converges weakly in $L^1((0,m) \times \Rxv)$ to $g^m$.
	As before, we have that if $N < M$ then $g^N$ and $g^M$ coincide in $t \in (0,N)$.
	Thus, if we consider
	\[
		g = \sum_{n = 1}^\infty g^n \mathbbm{1}_{[n-1,n)}(t),
	\]
	then $g^N$ is the restriction of $g$ to $(0,N)$.

	The diagonal sequence $g^{k_n(n)}$ converges to $g$ weakly in $L^1_{loc}((0,\infty); L^1(\Rxv))$.
	Indeed, for every $T > 0$, let $N > T$.
	By construction, the sequence $g^{k_n(n)}$ is a subsequence of $g^{k_N(n)}$, for $n > N$, thus it converges to $g^N$ in $(0,N)$ and in particular it converges to $g$ in $(0,T)$.

	We then relabel the diagonal sequence $k_n(n)$ simply as $n$ and we have, to summarize,
	\[
		f^n_{m_n} \weakto g\; \text{ in } L^1_{loc}((0,\infty); L^1(\Rxv)).
	\]
	
\end{proof}

\section{From weak to renormalized solutions}
A crucial step in the proof of Theorem 1 consists to find a renormalized formulation for the LFD equation, which we will later pass to the limit by making the quantum parameter tend to zero.
So far it is not known whether weak solutions of the LFD are also renormalized solutions (and in what sense these renormalized solutions would be defined), so in this section we will show that if we had a little more regularity in the variable $v$, then weak solutions are renormalized and vice versa.

\begin{proposition}
	\label{prop:approxim-implies-renorm}
	Suppose $g \in L^2_{loc}((0,\infty) \times \Rx; H^1_{loc}(\Rv)) \cap L^\infty((0,\infty) \times \Rxv)$, let $\beta_\delta(t) \equiv \frac{t}{1+\delta t}$ and $\Abar_{ij}, \Bbar_i, \pdv{\Bbar_i}{v_i} \in L^\infty_{loc}$.
	Thus, $g$ is a solution to
	\begin{equation}
		\label{eq:prop-1-weak-form}
		\pdv{g}{t} + v_i \pdv{g}{x_i} =
		\pdv{}{v_j} \left\{ \Abar_{ij} \pdv{g}{v_i} - \Bbar_i g (1 - \varepsilon g) \right\}
	\end{equation}
	in $\mathcal{D}'((0,\infty) \times \Rxv)$ if and only if $\beta_\delta(g)$ is a solution to
	\begin{multline}
		\label{eq:g-renormalized}
		\pdv{\beta_\delta(g)}{t} + v\cdot\nabla_x \beta_\delta(g)
		= \pdv{}{v_i}
		\left\{
			\Abar_{ij} \pdv{\beta_\delta(g)}{v_j}
			-\Bbar_i (1 - 2\varepsilon g) \beta_\delta(g)
			- 2 \varepsilon \Bbar_i B_\delta(g)
		\right\}\\
		-\beta_\delta''(g) \Abar_{ij} \pdv{g}{v_i} \pdv{g}{v_j}
		-\Cbar \left[
			g (1 - \varepsilon g)\beta_\delta'(g)
			- (1 - 2\varepsilon g)\beta_\delta(g)
			- 2\varepsilon B_\delta(g)
		\right]
	\end{multline}
	in $\mathcal{D}'((0,\infty) \times \Rxv)$, where $\Cbar = \sum_i \pdv{\Bbar_i}{v_i}$ and $B_\delta(x) = \int_{0}^{x} \beta_\delta(t) \; dt$.
\end{proposition}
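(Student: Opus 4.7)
The plan is to prove the two implications separately. The forward direction (weak $\Rightarrow$ renormalized) is the substantive one and proceeds by multiplying the equation by $\beta'_\delta(g)$ after a velocity mollification; the backward direction follows by sending $\delta \to 0$.

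For the forward direction, the formal computation is a chain/product rule exercise. The diffusion term gives, by the reverse product rule,
\[
    \beta'_\delta(g)\,\pdv{}{v_i}\left\{\Abar_{ij}\pdv{g}{v_j}\right\}
    = \pdv{}{v_i}\left\{\Abar_{ij}\pdv{\beta_\delta(g)}{v_j}\right\} - \beta''_\delta(g)\,\Abar_{ij}\pdv{g}{v_i}\pdv{g}{v_j}.
\]
For the drift term, the key observation is that the antiderivative $h(t) = (1-2\varepsilon t)\beta_\delta(t) + 2\varepsilon B_\delta(t)$ satisfies $h'(t) = (1-2\varepsilon t)\beta'_\delta(t)$, which yields the chain-rule identity $\beta'_\delta(g)(1-2\varepsilon g)\nabla_v g = \nabla_v h(g)$. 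Combining this with the divergence-product rule,
\[
    -\beta'_\delta(g)\pdv{}{v_i}\{\Bbar_i g(1-\varepsilon g)\}
    = -\pdv{}{v_i}\{\Bbar_i h(g)\} - \Cbar\left[g(1-\varepsilon g)\beta'_\delta(g) - h(g)\right],
\]
which is exactly the drift and $\Cbar$ contributions in \eqref{eq:g-renormalized}.

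To make these formal identities rigorous under the assumption $g \in L^2_{loc}((0,\infty)\times\Rx; H^1_{loc}(\Rv))\cap L^\infty$, I would mollify $g$ in the velocity variable, $g^\eta = g *_v \rho_\eta$, and convolve \eqref{eq:prop-1-weak-form} with $\rho_\eta$. The transport commutator $r_\eta = v\cdot\nabla_x g^\eta - (v\cdot\nabla_x g)*_v\rho_\eta$ can be written as $\nabla_x\cdot[(w\rho_\eta)*_v g]$ with kernel $\|w\rho_\eta\|_{L^1(\Rv)} = O(\eta)$; since $g\in L^\infty_{loc}$, this vanishes in $\mathcal{D}'$ as $\eta\to 0$. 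Since $g^\eta$ is smooth in $v$, multiplying by $\beta'_\delta(g^\eta)$ is pointwise legal; applying the chain/product rules above yields a renormalized equation for $g^\eta$, and I would pass to the limit $\eta\to 0$ termwise. The convergences needed are all standard: $(\Abar_{ij}\pdv{g}{v_j})*_v\rho_\eta \to \Abar_{ij}\pdv{g}{v_j}$ and $\pdv{g^\eta}{v_j}\to\pdv{g}{v_j}$ in $L^2_{loc}$, while $\beta^{(k)}_\delta(g^\eta)\to\beta^{(k)}_\delta(g)$ in $L^p_{loc}$ for all $p<\infty$ and all $k\ge 0$ by bounded dominated convergence. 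The most delicate term is the quadratic $\beta''_\delta(g^\eta)\Abar_{ij}\pdv{g^\eta}{v_i}\pdv{g^\eta}{v_j}$, which converges in $L^1_{loc}$ as a product of uniformly bounded factors ($\beta''_\delta$ and $\Abar$) and two $L^2_{loc}$-convergent factors; the divergence-form terms then converge in $\mathcal{D}'$ by continuity of distributional differentiation.

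For the backward direction, I would send $\delta\to 0$ in \eqref{eq:g-renormalized}. The boundedness of $g$ yields the uniformly bounded pointwise convergences $\beta_\delta(g)\to g$, $\beta'_\delta(g)\to 1$, $\beta''_\delta(g)\to 0$, and $B_\delta(g)\to g^2/2$. The drift contribution collapses to $-\pdv{}{v_i}\{\Bbar_i[g(1-2\varepsilon g)+\varepsilon g^2]\} = -\pdv{}{v_i}\{\Bbar_i g(1-\varepsilon g)\}$, the quadratic term vanishes because $\beta''_\delta(g)\to 0$ boundedly while $\Abar_{ij}\pdv{g}{v_i}\pdv{g}{v_j}\in L^1_{loc}$, and the $\Cbar$-remainder vanishes by the algebraic identity $g(1-\varepsilon g) - (1-2\varepsilon g)g - \varepsilon g^2 = 0$. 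This recovers \eqref{eq:prop-1-weak-form}.
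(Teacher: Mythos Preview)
Your backward direction and your formal algebra (in particular the identification of the antiderivative $h(t)=(1-2\varepsilon t)\beta_\delta(t)+2\varepsilon B_\delta(t)$) are correct and match the paper. The gap is in the forward direction, in how you justify the chain rule for the transport part.

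You mollify in $v$ only and then claim that ``since $g^\eta$ is smooth in $v$, multiplying by $\beta'_\delta(g^\eta)$ is pointwise legal.'' But smoothness in $v$ buys you nothing for the operator $\partial_t + v\cdot\nabla_x$: after $v$-mollification, $g^\eta$ still has no regularity in $(t,x)$, so $\partial_t g^\eta$ and $\nabla_x g^\eta$ are only distributions, and the identity $(\partial_t+v\cdot\nabla_x)\beta_\delta(g^\eta)=\beta'_\delta(g^\eta)(\partial_t+v\cdot\nabla_x)g^\eta$ is not available by pointwise computation. Worse, the commutator you correctly identify, $r_\eta=\nabla_x\cdot[(w\rho_\eta)*_v g]$, is a genuine first-order $x$-distribution, not an $L^1_{loc}$ function; the product $\beta'_\delta(g^\eta)\,r_\eta$ is therefore undefined (and DiPerna--Lions renormalization does not apply with a distributional right-hand side). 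Your statement that $r_\eta\to 0$ in $\mathcal D'$ is true but irrelevant until you can make sense of it against $\beta'_\delta(g^\eta)$.

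The paper proceeds the other way around: it mollifies in $(t,x)$, so that $g_m=g*_{t,x}\rho_m$ is smooth in $(t,x)$ and the transport commutator vanishes identically (multiplication by $v$ commutes with $(t,x)$-convolution). The chain rule $\partial_t\beta(g_m)=\beta'(g_m)\partial_t g_m$ is then classical. Since $g_m$ retains the $H^1_{loc}$ regularity in $v$, the $v$-derivative terms are handled as honest $L^2_{loc}$ products; a second smoothing $(\beta'(g_m)*\chi_k)\varphi$ is used only to produce a legitimate $C^\infty_c$ test function, and one passes $k\to\infty$ then $m\to\infty$. If you want to rescue your scheme, the minimal fix is to mollify in $(t,x)$ as well (or instead); mollifying in $v$ alone does not close.
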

\begin{proof}
	We begin with the direct implication.
	The formal procedure consists of multiplying both sides of the equation by $\beta_\delta'$ and then rewriting the derivatives in the appropriate form, using the product rule. However, due to the low regularity of $g$ (particularly with respect to the variables $t$ and $x$), we cannot justify this procedure directly.
	
	To overcome this, take $\rho$ to be a $C^\infty(\Rxv)$ function with support in $(-1, 0) \times B_x(0, 1)$ and define the mollifying sequence
	\[
		\rho_m(t,x) = \frac{1}{m^{N+1}} \rho\left(
			\frac{t}{m}, \frac{x}{m} \right).
	\]
	Fix $t > 0$ and $x \in \mathbb{R}^N_x$ and consider a test function $\varphi = \varphi(v)$ in $C^\infty_c(\mathbb{R}^N_v)$.
	Testing equation \eqref{eq:prop-1-weak-form} against 
	\[
		\Psi_{t,x} =
		\begin{cases}
			\mathbb{R}_t \times \mathbb{R}^N_x &\to \mathbb{R}\\
			\hfil (s,y) &\mapsto \rho_m(t-s, x-y) \varphi(v)
		\end{cases}
	\]
	leads
	\begin{multline*}
		- \int_0^\infty \iint g(s,y,v) \left( \pdv{\rho_m}{t} + v_i \pdv{\rho_m}{x_i} \right)(t-s, x-y,v) \varphi(v) \; dydv ds\\
		= \int_0^\infty \iint \left[ \Abar_{ij} \pdv{g}{v_j} - \Bbar_i g (1-\varepsilon g) \right](s,y,v)
			\rho_m(t-s, x-y) \pdv{\varphi}{v_i}(v) \; dydvds,
	\end{multline*}
	which are convolutions (in the variables $t$ and $x$) with $\rho_m$.
	Here it will be useful to establish a notation for the rest of the proof. For any distribution $T$, let's note $T_m$ the convolution 
	\[
		T_m \equiv T *_{t,x} \rho_m.
	\]
	This way, we rewrite the above expression as
	\begin{multline*}
		- \int \pdv{g_m}{t}(t,x,v) \varphi(v) \; dv
		- \int v_i \pdv{g_m}{x_i}(t,x,v) \varphi(v) \; dv\\
		= \int \left[ \Abar_{ij} \pdv{g}{v_j} - \Bbar_i g (1-\varepsilon g) \right]_m (t,x,v) \pdv{\varphi}{v_i}(v) \; dv,
	\end{multline*}
	for every $t>0$ and $x \in \mathbb{R}^N_x$.
	
	Let $\varphi \in C^\infty_c((0, \infty) \times \Rxv)$,
	taking $v \mapsto \varphi(t,x,v)$ as a test function in the expression above and integrating in $(t,x) \in (0, +\infty) \times \mathbb{R}^N$ we obtain
	\[
		- \inttxv \pdv{g_m}{t} \varphi
		- \inttxv v_i \pdv{g_m}{x_i} \varphi\\
		= \inttxv \left[ \Abar_{ij} \pdv{g}{v_j} - \Bbar_i g (1-\varepsilon g) \right]_m \pdv{\varphi}{v_i},
	\]
	where we have used the notation $\inttxv$ to denote $\int_{0}^{\infty} \intxv \; dt$.
	
	Now let $\chi_k$ be a mollifying sequence in all three variables $(t,x,v)$, with support in $(-1, 0) \times B_x(0, 1) \times B_v(0,1)$.
	Taking $(\beta'(g_m) * \chi_k) \varphi$ as a test function in the equation above, we have
	\begin{multline}
		\label{eq:gm-renormalization-with-k}
		- \inttxv \pdv{g_m}{t} (\beta'(g_m) * \chi_k) \varphi
		- \inttxv v_i \pdv{g_m}{x_i} (\beta'(g_m) * \chi_k) \varphi\\
		= \inttxv \left[ \Abar_{ij} \pdv{g}{v_j} - \Bbar_i g (1-\varepsilon g) \right]_m 
			\pdv{[(\beta'(g_m) * \chi_k) \varphi]}{v_i}.
	\end{multline}
	
	We start by passing this equation to the limit $k \to \infty$.
	To lighten the notation, for the rest of this proof, let us simply note $L^p_{loc}$ for $L^p_{loc}((0,\infty) \times \Rxv)$.
	Since $|\beta'(g_m)| \leq 1$, we have, by standard convolution results,
	\begin{alignat*}{2}
		\pdv{g_m}{t} (\beta'(g_m) * \chi_k)
		&\xrightarrow{k \to \infty}
		\pdv{g_m}{t} \beta'(g_m)
		= \pdv{\beta(g_m)}{t}\\
		v_i \pdv{g_m}{x_i} (\beta'(g_m) * \chi_k)
		&\xrightarrow{k \to \infty}
		v_i \pdv{g_m}{x_i} \beta'(g_m)
		= v_i \pdv{\beta(g_m)}{x_i}
	\end{alignat*}
	in $L^p_{loc}$, for every $1 \leq p < \infty$,
	hence the left-hand side of \eqref{eq:gm-renormalization-with-k} converges to
	\[
		- \inttxv \pdv{\beta (g_m)}{t} \varphi
		- \inttxv v_i \pdv{\beta(g_m)}{x_i} \varphi
		=
		\inttxv \beta(g_m) \pdv{\varphi}{t} 
		+ \inttxv \beta(g_m) v_i \pdv{\varphi}{x_i},
	\]
	after integration by parts.
	For the right-hand side of \eqref{eq:gm-renormalization-with-k}, consider
	\[
		\pdv{[(\beta'(g_m) * \chi_k) \varphi]}{v_i} =
			\left[ \left(\beta''(g_m) \pdv{g_m}{v_i}\right) * \chi_k \right] \varphi
			+ \left( \beta'(g_m) * \chi_k \right) \pdv{\varphi}{v_i}.
	\]
	Using that $\beta''(g_m) \pdv{g_m}{v_i}$ and $\beta'(g_m)$ are in $L^2_{loc}$, the above expression converges to
	\[
		\beta''(g_m) \pdv{g_m}{v_i} \varphi	+ \beta'(g_m) \pdv{\varphi}{v_i}
	\]
	in $L^2_{loc}$ and is compactly supported.
	Since $\left[ \Abar_{ij} \pdv{g}{v_j} - \Bbar_i g (1-\varepsilon g) \right]_m \in L^2_{loc}$, passing \eqref{eq:gm-renormalization-with-k} to the limit $k \to \infty$ gives
	\begin{multline}
		\label{eq:gm-renormalization}
		\inttxv \beta(g_m) \pdv{\varphi}{t} 
		+ \inttxv \beta(g_m) v_i \pdv{\varphi}{x_i}\\
		= \inttxv \left[ \Abar_{ij} \pdv{g}{v_j} - \Bbar_i g (1-\varepsilon g) \right]_m
			\left(\beta''(g_m) \pdv{g_m}{v_i} \varphi	+ \beta'(g_m) \pdv{\varphi}{v_i}\right),
	\end{multline}
	and it remains to pass this equation to the limit $m \to \infty$.

	The left-hand side of \eqref{eq:gm-renormalization} converges to
	\[
		\inttxv \beta(g) \pdv{\varphi}{t} 
		+ \inttxv \beta(g) v_i \pdv{\varphi}{x_i}
	\]
	since $g_m \to g$ almost everywhere and $|\beta(g_m)| \leq C$.
	For the integral
	\[
		\inttxv \left[ \Abar_{ij} \pdv{g}{v_j} \right]_m 
			\left(\beta''(g_m) \pdv{g_m}{v_i} \varphi + \beta'(g_m) \pdv{\varphi}{v_i} \right),
	\]
	we have that $\left[ \Abar_{ij} \pdv{g}{v_j} \right]_m \to \Abar_{ij} \pdv{g}{v_j}$ in $L^2_{loc}$.
	Passing to a subsequence, if necessary, we have that $\pdv{g_m}{v_i}$ is dominated by an $L^2$ function.
	Since $\beta''$ is bounded, we conclude by dominated convergence that
	\[
		\left[ \Abar_{ij} \pdv{g}{v_j} \right]_m \beta''(g_m) \pdv{g_m}{v_i} 
		\to  \beta''(g) \Abar_{ij} \pdv{g}{v_i} \pdv{g}{v_j}
	\]
	and
	\[
		\left[ \Abar_{ij} \pdv{g}{v_j} \right]_m \beta'(g_m)
		\to \Abar_{ij} \pdv{g}{v_j} \beta'(g) = \Abar_{ij} \pdv{\beta(g)}{v_j}
	\]
	in $L^1_{loc}$, thus the above integrals converge to
	\[
		\inttxv \beta''(g) \Abar_{ij} \pdv{g}{v_i} \pdv{g}{v_j} \varphi
		+ \inttxv \Abar_{ij} \pdv{\beta(g)}{v_j} \pdv{\varphi}{v_i}
	\]
	and similarly, one can prove that
	\[
		\inttxv \left[ \Bbar_i g (1-\varepsilon g) \right]_m
		\left(\beta''(g_m) \pdv{g_m}{v_i} \varphi	+ \beta'(g_m) \pdv{\varphi}{v_i}\right)
		\to
		\inttxv \Bbar_i g (1-\varepsilon g) 
		\left(\beta''(g) \pdv{g}{v_i} \varphi + \beta'(g) \pdv{\varphi}{v_i}\right).
	\]
	
	However, the limit distribution we have above is not yet in the same form as that found in equation \eqref{eq:g-renormalized}.
	Consider then $(g_n)_n$ a sequence of $C^\infty_c((0,\infty) \times \Rxv)$ functions such that $g_n \to g$ in $L^2_{loc}((0,\infty) \times \Rx; H^1_{loc}(\Rv))$.
	For each $n$, notice we have
	\begin{multline*}
		\beta'(g_n) \pdv{}{v_i} \left[ \Bbar_i g_n(1-\varepsilon g_n) \right]
		= \pdv{}{v_i}
		\left[
			\Bbar_i (1 - 2\varepsilon g_n) \beta(g_n)
			+ 2\varepsilon \Bbar_i B(g_n)
		\right] +\\
		\Cbar \left[
			g_n(1 - \varepsilon g_n)\beta'(g_n)
			- (1 - 2\varepsilon g_n)\beta(g_n)
			- 2\varepsilon B(g_n)
		\right],
	\end{multline*}
	but also
	\[
		\beta'(g_n) \pdv{}{v_i} \left[ \Bbar_i g_n(1-\varepsilon g_n) \right]
		=
		\pdv{}{v_i} \left[ \Bbar_i g_n(1-\varepsilon g_n) \beta'(g_n) \right]
		- \Bbar_i g_n(1-\varepsilon g_n) \beta''(g_n) \pdv{g_n}{v_i},
	\]
	so passing both sides to the limit in the sense of distributions we conclude that
	\begin{multline*}
		- \inttxv \Bbar_i g (1-\varepsilon g) \left(\beta''(g) \pdv{g}{v_i} \varphi + \beta'(g) \pdv{\varphi}{v_i}\right)
		= - \inttxv \left[
			\Bbar_i (1 - 2\varepsilon g) \beta(g)
			+ 2\varepsilon \Bbar_i B(g)
		\right] \pdv{\varphi}{v_i}\\
		+\inttxv \Cbar \left[
			g(1 - \varepsilon g)\beta'(g)
			- (1 - 2\varepsilon g)\beta(g)
			- 2\varepsilon B(g)
		\right] \varphi,
	\end{multline*}
	for every $\varphi \in C^\infty_c((0,\infty) \times \Rxv)$ and therefore
	\begin{multline*}
		- \inttxv \beta_\delta(g) \pdv{\varphi}{t}
		- \inttxv \beta_\delta(g) v \cdot \nabla_x \varphi
		= \inttxv \Abar_{ij} \beta_\delta(g) \mpdv{\varphi}{v_i}{v_j}\\
		+ \inttxv \left[
			\pdv{\Abar_{ij}}{v_j} \beta_\delta(g)
			+ \Bbar_i (1-2\varepsilon g) \beta_\delta(g)
			+ 2 \varepsilon \Bbar_i B(g)
			\right] \pdv{\varphi}{v_i}
		- \inttxv \beta_\delta''(g) \Abar_{ij} \pdv{g}{v_i} \pdv{g}{v_j} \varphi\\
		+ \inttxv \Cbar \left[ 
			g (1 - \varepsilon g)\beta_\delta'(g)
			- (1 - 2\varepsilon g)\beta_\delta(g)
			- 2\varepsilon B(g) 
			\right] \varphi
	\end{multline*}
	for every $\varphi \in C^\infty_c((0,\infty) \times \Rxv)$.

	For the reverse implication, if $g$ satisfies the above equation for every $\delta > 0$, taking $\delta_n = 1/n \to 0$ and notating $\beta_n$ for $\beta_{\delta_n}$ we have that, almost everywhere,
	\[
		\beta_n(g) \to g,\;
		\beta_n'(g) \to 1,\;
		\beta_n''(g) \to 0
	\]
	and from dominated convergence, $B_n(g) \to g^2/2$ a.e.
	This implies, given that $g \in L^\infty$, that the left-hand side converges to
	\[
		- \inttxv g \pdv{\varphi}{t}
		- \inttxv g v \cdot \nabla_x \varphi.
	\]
	For the right-hand side we have that,
	\begin{align*}
		\Abar_{ij} \beta_n(g) 
			&\to \Abar_{ij} g\\
		\pdv{\Abar_{ij}}{v_j} \beta_n(g)
			+ \Bbar_i (1-2\varepsilon g) g
			+ 2 \varepsilon \Bbar_i B(g)
			&\to
			\pdv{\Abar_{ij}}{v_j} g
			+ \Bbar_i (1-\varepsilon g) g\\
		\beta_n''(g) \Abar_{ij} \pdv{g}{v_i} \pdv{g}{v_j} 
			&\to 0\\
		\Cbar \left[ 
			g (1 - \varepsilon g)\beta_n'(g)
			- (1 - 2\varepsilon g)\beta_n(g)
			- 2\varepsilon B(g) 
		\right]
			&\to 0
	\end{align*}
	almost everywhere, but since $|\beta_n(g)| \leq g$, $|\beta'_n(g)| \leq 1$, $|\beta''_n(g)| \leq 2$, $|B_n(g)| \leq g^2/2$ and $g \in L^\infty((0,\infty) \times \Rxv)$, these convergences actually hold in $L^1_{loc}$.
\end{proof}

An immediate corollary of the above result is that although we don't know whether weak LFD solutions are also renormalized solutions, the solutions in an approximation scheme definitely are.

\begin{lemma}
Let $f$ be a suitable solution of LFD, with $f_m \to f$ an approximating scheme, as in Definition \ref{def:suitable-weak-solution}.
Then, taking $\beta_\delta(t) = \frac{t}{1+\delta t}$ we have, for each $n$,
\begin{multline}
	\label{lfd-renormalized}
	\pdv{\beta_\delta(f_m)}{t} + v\cdot\nabla_x \beta_\delta(f_m)
	= \pdv{}{v_i}
	\left\{
		\abar^m_{ij} \pdv{\beta_\delta(f_m)}{v_j}
		-\bbar^m_i (1 - 2\varepsilon f_m) \beta_\delta(f_m)
		- 2 \varepsilon \bbar^m_i B_\delta(f_m)
	\right\}\\
	-\beta_\delta''(f_m) \abar^m_{ij} \pdv{f_m}{v_i} \pdv{f_m}{v_j}
	-\cbar^m \left[
		f_m (1 - \varepsilon f_m)\beta_\delta'(f_m)
		- (1 - 2\varepsilon f_m)\beta_\delta(f_m)
		- 2\varepsilon B_\delta(f_m)
	\right]
\end{multline}
in $\mathcal{D}'([0,T] \times \Rxv)$, where $\abar^m$ and $\bbar^m$ are the same as in equation \eqref{regularized} and $\cbar^m = \pdv{\bbar^m_i}{v_i}$.
\end{lemma}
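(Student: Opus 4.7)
The plan is to invoke Proposition \ref{prop:approxim-implies-renorm} directly, specialized to $g = f_m$, $\Abar = \abar^m$, $\Bbar = \bbar^m$ and quantum parameter $\varepsilon$. The entire content of the lemma is the observation that the elements of an approximating scheme satisfy the hypotheses of that proposition; once this is checked, the renormalized equation \eqref{lfd-renormalized} follows as a direct translation of \eqref{eq:g-renormalized}, with $\cbar^m = \Div_v \bbar^m$ playing the role of $\Cbar$.

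The first step is to verify the regularity hypothesis $g \in L^2_{loc}((0,\infty) \times \Rx; H^1_{loc}(\Rv)) \cap L^\infty((0,\infty) \times \Rxv)$. The membership in $L^2_{loc}((0,T) \times \Rx; H^1_{loc}(\Rv))$ is immediate from the requirement $f_m \in L^2((0,T) \times \Rx; H^1(\Rv))$ built into Definition \ref{def:suitable-weak-solution}, while the pointwise bound $0 \leq f_m \leq \varepsilon^{-1}$ from \eqref{eq:approx-uniform-bound} supplies the $L^\infty$ control. The second step is to verify the hypotheses on the coefficients, namely $\Abar_{ij}, \Bbar_i, \Div_v \Bbar \in L^\infty_{loc}$; these are exactly the content of part (i) of Definition \ref{def:suitable-weak-solution}, which posits $\abar^m, \bbar^m, \Div_v \abar^m$ and $\Div_v \bbar^m$ in $L^\infty_{loc}((0,T) \times \Rxv)$. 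The third step is to observe that $f_m$ solves \eqref{eq:prop-1-weak-form} in $\mathcal{D}'$: this is precisely \eqref{regularized}, which is the defining property of the approximating scheme.

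With all three items in hand, Proposition \ref{prop:approxim-implies-renorm} applies verbatim and produces \eqref{eq:g-renormalized} for $g = f_m$, which upon relabeling is exactly the announced identity \eqref{lfd-renormalized} in $\mathcal{D}'((0,\infty) \times \Rxv)$, hence a fortiori in $\mathcal{D}'([0,T] \times \Rxv)$. There is no serious obstacle in this lemma; all the analytical work, most notably the mollification argument in $(t,x)$ needed to legitimize the multiplication by $\beta_\delta'(f_m)$ despite the absence of time and space regularity, and the reformulation of the $\bbar$-term using the primitive $B_\delta$, has already been carried out inside the proof of Proposition \ref{prop:approxim-implies-renorm}. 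The lemma is therefore best viewed as an immediate corollary recording that every approximating scheme furnishes an honest renormalized LFD formulation, which is what will be exploited in the subsequent sections to perform the semi-classical limit.
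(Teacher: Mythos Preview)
Your proposal is correct and mirrors the paper's approach exactly: the paper states this lemma explicitly as ``an immediate corollary of the above result'' (Proposition \ref{prop:approxim-implies-renorm}) and gives no further argument. Your verification that Definition \ref{def:suitable-weak-solution} supplies precisely the regularity and coefficient hypotheses needed to invoke that proposition is the complete content of the proof.
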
 

\section{Diagonal compactness of approximating schemes}
The existence results for the Landau equation in \cite{villani_1996} and Landau-Fermi-Dirac in \cite{sampaio-2024} are, in essence, compactness results for approximation schemes.
For example, for the Landau-Fermi-Dirac equation, it is shown that a sequence of solutions in $L^2_{loc}((0,\infty); L^2(\Rx; H^1(\Rv)))$ to an approximate equation is strongly compact in $L^1_{loc}((0,\infty); L^1(\Rxv))$, which allows us to pass the approximate equation to the limit.

The idea then is to take advantage of this compactness of the approximation schemes to extract the compactness we want for a semi-classical limit.
However, since the LFD existence theorem is proved for a fixed quantum parameter, we can't use it directly for a semi-classical limit.

Consider then a sequence of vanishing quantum parameters, $\delta_n \to 0$ and, for each $n$, let $f^n$ be a suitable solution of LFD, with $f^n_m \to f^n$ being an approximation scheme for this solution.
We want to prove that the diagonal of the diagram \eqref{diagram:diagonal} is strongly compact, that is, given a sequence $m_n \to \infty$, the sequence of approximate solutions $(f^n_{m_n})_n$ is strongly compact in $L^1$.
Once this diagonal has been established, we can choose $f^n_m$ closer and closer to $f^n$, and in the end we will have that the $f^n$ converge “in tow”, culminating in
\begin{lemma}
	\label{lem:ae-comp}
	Let $(f^n)$ be a sequence of suitable weak solutions to \eqref{eq:lfd}.
	For each $n$, consider an approximating scheme $f^n_m \to f^n$ satisfying \eqref{ineq-uniform-ellip}.
	If the diagonal sequence $(f^n_{m_n})_n$ is weakly compact in $L^1_{loc}((0,\infty) \times \Rxv)$ and
	\[
		\abar^{n,m_n}, \;
		\Div_v \abar^{n,m_n}, \;
		\bbar^{n,m_n}, \;
		\cbar^{n,m_n}
	\]
	are bounded sequences in $L^1_{loc}((0,\infty) \times \Rxv)$,
	then $(f^n_{m_n})_n$ is strongly compact in $L^1_{loc}((0,\infty) \times \Rxv)$.
\end{lemma}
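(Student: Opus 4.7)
The plan is to upgrade the hypothesized weak $L^1_{loc}$ compactness of $(f^n_{m_n})_n$ to strong compactness by the classical kinetic scheme: extract $v$-regularity from the degenerate-elliptic diffusion, extract $(t,x)$-regularity of velocity averages from kinetic averaging, and interpolate. Write $F^n = f^n_{m_n}$, and for $\delta > 0$ let $\beta_\delta(t) = t/(1+\delta t)$, so that $\beta_\delta(F^n) \in [0, 1/\delta]$ uniformly in $n$. By the renormalization lemma just established, each $F^n$ satisfies the renormalized equation \eqref{lfd-renormalized} with coefficients $\abar^{n,m_n}, \bbar^{n,m_n}, \cbar^{n,m_n}$ controlled in $L^1_{loc}$.

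For the $v$-regularity I would fix a smooth cutoff $\chi \in C^\infty_c((0,T) \times \Rxv)$, test the renormalized equation against $\chi$, and isolate the quadratic term. Choosing $\gamma_\delta$ with $(\gamma_\delta')^2 = -\beta_\delta''$, this term takes the form
\[
	\inttxv \abar^{n,m_n}_{ij} \pdv{\gamma_\delta(F^n)}{v_i} \pdv{\gamma_\delta(F^n)}{v_j} \chi,
\]
while every remaining term in \eqref{lfd-renormalized} is estimated uniformly in $n$ using the $L^1_{loc}$ bounds on the coefficients together with $\beta_\delta \leq 1/\delta$. Feeding the ellipticity \eqref{ineq-uniform-ellip} into this quadratic form and integrating out $v_*$ via the standard rotation argument (as in \cite{villani_1996, sampaio-2024}) yields
\[
	\int_0^T \iint_{\Rxv} \abs{\nabla_v \gamma_\delta(F^n)}^2 \chi \, dxdvdt \leq C_{\chi,\delta},
\]
uniformly in $n$, modulo a local lower bound on $\int F^n_*(1-\varepsilon_n F^n_*)\,dv_*$ of the form proved in Step II of Lemma \ref{lem:diag-weak-convergence} from the $L\log L$ control on $F^n$.

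For the $(t,x)$-regularity I would rewrite \eqref{lfd-renormalized} in kinetic form
\[
	(\partial_t + v \cdot \nabla_x) \beta_\delta(F^n) = \Div_v G^n_1 + G^n_0,
\]
with $G^n_0, G^n_1$ bounded in $L^1_{loc}$ and $\beta_\delta(F^n) \in L^\infty$. The DiPerna-Lions-Meyer velocity-averaging lemma then provides, for every $\varphi \in C^\infty_c(\Rv)$, strong $L^1_{loc}((0,T)\times\Rx)$ compactness of $\int_{\Rv} \beta_\delta(F^n) \varphi\, dv$. Combining this with the $v$-regularity of $\gamma_\delta(F^n)$ via the Riesz-Frechet-Kolmogorov translation criterion yields strong $L^1_{loc}((0,T)\times\Rxv)$ compactness of $\beta_\delta(F^n)$ for each fixed $\delta>0$. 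A diagonal extraction along $\delta_k = 1/k \to 0$, using the pointwise estimate $|t-\beta_\delta(t)| \leq \delta t^2$ and the equi-integrability of $(F^n)$ provided by the weak $L^1$ compactness hypothesis, then upgrades this to strong $L^1_{loc}$ compactness of $F^n$ itself.

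The main obstacle I anticipate is the $v$-regularity step: the ellipticity \eqref{ineq-uniform-ellip} is degenerate, since its weight $F^n_*(1-\varepsilon_n F^n_*)$ vanishes where $F^n_*$ is close to $0$ or $\varepsilon_n^{-1}$, while the coefficients are only controlled in $L^1_{loc}$. Extracting a genuine $H^1_v$-type bound on $\gamma_\delta(F^n)$ therefore requires careful bookkeeping to guarantee that, after integration in $v_*$, the weight does not locally degenerate uniformly in $n$ — and it is at this juncture that the uniform initial-entropy hypothesis together with the local mass lower bound inherited from Lemma \ref{lem:diag-weak-convergence} become indispensable.
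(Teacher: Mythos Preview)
Your overall architecture---renormalize, bound the quadratic term (this is Lemma~\ref{lem-aij-v-bounded}), get $(t,x)$-compactness of velocity averages via kinetic averaging (Lemma~\ref{lem:velocity-averages-converge}), extract $v$-regularity from ellipticity, then combine---matches the paper. The genuine gap is the $v$-regularity step, and you correctly flag it yourself. You want to feed \eqref{ineq-uniform-ellip} into the quadratic form and obtain a uniform $H^1_{v,loc}$ bound on $\gamma_\delta(F^n)$, ``modulo a local lower bound on $\int F^n_*(1-\varepsilon_n F^n_*)\,dv_*$''. But no such pointwise-in-$(t,x)$ lower bound follows from the hypotheses: one has only global $L^1_2$ and $L\log L$ control, and nothing rules out local vacuum at some $(t,x)$. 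What you cite from Lemma~\ref{lem:diag-weak-convergence} (its Part~II) is an \emph{upper} bound on $\int f^n\log f^n$, not a lower bound on local mass; it cannot play the role you assign it. The paper explicitly notes at the start of the almost-everywhere-compactness subsection that this obstruction is real and that a uniform ellipticity estimate for $\abar^{n,m_n}$ ``seems to us to be a very difficult estimate to achieve''.

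The paper's fix replaces your pair ``full $H^1_v$ bound + Riesz--Fr\'echet--Kolmogorov'' by two results tailored to degenerate ellipticity. First, Lemma~\ref{lemma-ellipticity}: once the velocity averages of $f^n_{m_n}(1-\varepsilon_n f^n_{m_n})$ converge (this is why Lemma~\ref{lem:velocity-averages-converge} goes to the trouble of proving compactness for averages of $F^n_{m_n}$ and not only of $\beta_\delta(f^n_{m_n})$), one obtains uniform ellipticity of $\abar^{n,m_n}$ \emph{only} on sets of the form $(K_\alpha\setminus E)\times B_R$, where $K_\alpha=\{(t,x):\int F\,dv>\alpha\}$ is defined via the \emph{limit} $F$ and $|E|<\varepsilon$ is arbitrarily small. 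Combined with Lemma~\ref{lem-aij-v-bounded} this gives $H^1_v$ control of $\beta_1(f^n_{m_n})$ restricted to those sets. Second, Proposition~\ref{prop-ae-compactness} is the device that converts this ``quasi-$H^1_v$'' information together with compact velocity averages into almost-everywhere convergence of $\beta_1(f^n_{m_n})$; strong $L^1_{loc}$ compactness of $f^n_{m_n}$ then follows from a.e.\ convergence and the assumed weak $L^1$ compactness via Vitali/Scheff\'e. Your proposal supplies no substitute for either of these two steps, and your diagonal extraction over $\delta$ becomes unnecessary once one has a.e.\ convergence of $\beta_1(f^n_{m_n})$, since $\beta_1$ is invertible.
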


The outline of the proof of Lemma \ref{lem:ae-comp} is very reminiscent of the proof of the existence of weak solutions to the LFD equation.
First, we prove the compactness of velocity averages of the solution sequence, which gives us compactness in the $t$ and $x$ variables.
Then we study the parabolic part of the equation in order to get compactness in the $v$ variable and finally a proposition will allow us to join the two results and get full compactness in $L^1$ for these solutions.

\subsection{Velocity Averaging}

In the study of transport equations, the averaging lemmas constitute a whole body of results stating roughly that, if $f$ is a solution to a transport equation $\partial_t f + v \cdot \nabla f = g$ such that $f$ and $g$ are bounded in $L^2$, for example, then the velocity average
\[
	\int f \varphi \; dv
\]
should be bounded in a more regular space (in the above example, it is bounded in $H^{1/2}$).
Hence for a bounded sequence of solutions $f^n$ then, we expect the velocity averages to be compact in $L^2$.

First shown independently by Agoshkov \cite{agoshkov_1984} and Golse, Perthame and Sentis in \cite{golse_1988} in an $L^2$ setting, averaging lemmas are now a standard technique in the study of kinetic equations.
The version we'll use here is a simple corollary of Theorem 1.1.8 of \cite{bouchut-2000}, which reads
\begin{theorem}
	\label{bouchut-vel-avg}
	Let $\Omega$ be an open set of $\mathbb{R}_t \times \mathbb{R}^N_x$, $(f_n)_n$ a sequence of functions bounded in $L^p_{loc}(\Omega \times \mathbb{R}^M)$, with $p>1$, verifying
	\[
		\partial_t f_n + v \cdot \nabla_x f_n =
		\sum_{|\alpha| \leq m} \partial^\alpha_v g_n^{(\alpha)}
		\text{ in }
		\Omega \times \mathbb{R}^M
	\]
	for some $m \in \mathbb{N}$, where $(g_n^{(\alpha)})_n$ is a bounded sequence in the space of measures $\mathcal{M}_{loc}(\Omega \times \mathbb{R}^M)$ for each $|\alpha| \leq m$.
	For any $\psi \in C^\infty_c(\mathbb{R}^M)$, consider the velocity averages
	\[
		\rho^n_\psi(t,x) = \int f_n(t,x,v) \psi(v) \, dv.
	\]
	Then, the sequence $(\rho^n_\psi)_n$ is compact in $L^q_{loc}(\Omega)$ for any $q < p$.
\end{theorem}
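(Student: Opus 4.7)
The plan is to derive this as a direct corollary of Theorem 1.1.8 in Bouchut's book \cite{bouchut-2000}, which provides a quantitative fractional regularity estimate for the velocity averages of $L^p$-bounded sequences of solutions to kinetic transport equations with measure-valued right-hand sides. The content of the corollary we want to prove is to convert this regularity gain into strong compactness, which is a standard local Sobolev embedding argument.

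First, I would localize the problem. Fix a $\psi \in C^\infty_c(\mathbb{R}^M)$ with $\supp \psi \subset B_R$ for some $R > 0$. Then $\rho^n_\psi$ depends only on the values of $f_n$ on $\Omega \times B_R$, and by H\"older's inequality together with the uniform $L^p_{loc}(\Omega \times \mathbb{R}^M)$-bound on $(f_n)_n$, the sequence $(\rho^n_\psi)_n$ is uniformly bounded in $L^p_{loc}(\Omega)$. Applying Bouchut's Theorem 1.1.8 to the relation $\partial_t f_n + v\cdot \nabla_x f_n = \sum_{|\alpha| \leq m} \partial_v^\alpha g_n^{(\alpha)}$ with $(f_n)_n$ bounded in $L^p_{loc}$ and $(g_n^{(\alpha)})_n$ bounded in $\mathcal{M}_{loc}$, I obtain that $(\rho^n_\psi)_n$ is bounded in some fractional Sobolev space $W^{s,r}_{loc}(\Omega)$ for appropriate parameters $s > 0$ and $r \in (1,p]$ depending on $p$, $m$ and $M$.

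Second, I would conclude compactness via the Rellich--Kondrachov theorem. For any bounded open set $\Omega' \Subset \Omega$, the compact embedding $W^{s,r}(\Omega') \hookrightarrow\hookrightarrow L^r(\Omega')$ immediately provides a subsequence converging strongly in $L^r(\Omega')$. To upgrade the exponent up to any $q < p$, I would interpolate between this strong convergence in $L^r_{loc}$ and the uniform $L^p_{loc}$-bound: writing $\tfrac{1}{q} = \tfrac{\theta}{r} + \tfrac{1-\theta}{p}$ with $\theta \in (0,1)$, one gets $\|\rho^n_\psi - \rho^\infty_\psi\|_{L^q(\Omega')} \leq \|\rho^n_\psi - \rho^\infty_\psi\|_{L^r(\Omega')}^\theta \cdot \|\rho^n_\psi - \rho^\infty_\psi\|_{L^p(\Omega')}^{1-\theta}$, and the right-hand side tends to zero. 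A standard diagonal extraction over an exhaustion of $\Omega$ by relatively compact subsets then gives compactness in $L^q_{loc}(\Omega)$.

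The main obstacle, which is actually already resolved by citing Bouchut, is the measure-valued right-hand side: classical averaging lemmas work cleanly when the source is in $L^p$, but here the $\partial_v^\alpha g_n^{(\alpha)}$ have negative Sobolev regularity. Bouchut's 1.1.8 handles this case directly by accepting a loss in the order of fractional derivative gained, which is the reason we cannot reach $q = p$ and must content ourselves with $q < p$. Once Bouchut's estimate is in hand, the remaining work is purely bookkeeping between Sobolev indices and interpolation exponents.
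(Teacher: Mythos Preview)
Your proposal is correct and matches the paper's own treatment: the paper does not prove this theorem at all but simply states it as a corollary of Theorem~1.1.8 in \cite{bouchut-2000}, which is exactly the reference you invoke. Your sketch of localizing, applying Bouchut's fractional regularity estimate, then upgrading to $L^q_{loc}$-compactness via Rellich--Kondrachov, interpolation, and diagonal extraction is the standard way to make this corollary precise, and goes slightly beyond what the paper writes.
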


In order to apply this theorem to the renormalized equation \eqref{lfd-renormalized} we have deduced in the last section, we first show an estimate for the quadratic term in the derivatives on the right-hand side of this equation.

\begin{lemma}
	\label{lem-aij-v-bounded}
	Let $(f^n)$ be a sequence of suitable weak solutions to \eqref{eq:lfd}.
	For each $n$, consider an approximating scheme $f^n_m \to f^n$, as in Definition \ref{def:suitable-weak-solution}.
	If the sequences 
	\[
		\abar^{n,m_n}, \;
		\Div_v \abar^{n,m_n}, \;
		\bbar^{n,m_n}, \;
		\cbar^{n,m_n}
	\]
	are uniformly bounded in $L^1_{loc}((0,\infty) \times \Rxv)$ then for every $T, R > 0$,
	\[
		\iiint_{(0,T) \times B_R \times B_R} \abar^{n,m_n}_{ij} \pdv{\beta(f^n_{m_n})}{v_i} \pdv{\beta(f^n_{m_n})}{v_j} \,dtdxdv
		\leq C_{T,R}
	\]
	where $C_{T,R}>0$ is a universal constant, which doesn't depend on $n$ and $\beta(t) = \frac{t}{1+t}$.
\end{lemma}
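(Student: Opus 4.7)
My plan is to extract the bound directly from the renormalized formulation \eqref{lfd-renormalized} for the approximating scheme (which is available thanks to the previous lemma), using the particular structure of $\beta(t) = t/(1+t)$. The key algebraic observation is that for this $\beta$ we have $\beta'(t) = (1+t)^{-2}$ and $\beta''(t) = -2(1+t)^{-3}$, so on $\{t \geq 0\}$ one has the pointwise bound $(\beta'(t))^2 \leq \tfrac{1}{2}(-\beta''(t))$. Since $\abar^{n,m_n}$ is a nonnegative matrix (it is the sum of a nonnegative convolution-like object and a nonnegative matrix, by hypothesis \eqref{eq:abar-m-elliptic} or \eqref{ineq-uniform-ellip}), this inequality transfers directly to the quadratic forms:
\[
    \abar^{n,m_n}_{ij}\pdv{\beta(f^n_{m_n})}{v_i}\pdv{\beta(f^n_{m_n})}{v_j}
    = (\beta'(f^n_{m_n}))^2 \abar^{n,m_n}_{ij}\pdv{f^n_{m_n}}{v_i}\pdv{f^n_{m_n}}{v_j}
    \leq \tfrac{1}{2}(-\beta''(f^n_{m_n}))\abar^{n,m_n}_{ij}\pdv{f^n_{m_n}}{v_i}\pdv{f^n_{m_n}}{v_j}.
\]
So it suffices to control the right-hand side (the quadratic term appearing in \eqref{lfd-renormalized} with $\delta=1$) uniformly in $n$ on every compact.

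To do this, I would fix $T,R>0$, pick a nonnegative cutoff $\varphi \in C^\infty_c((0,\infty)\times\Rxv)$ with $\varphi \equiv 1$ on $(0,T)\times B_R\times B_R$, and test \eqref{lfd-renormalized} (with $\beta = \beta_1$) against $\varphi$. Moving the transport terms and the $v$-divergence terms off of $\beta(f^n_{m_n})$ through integration by parts, and reinterpreting $\abar^{n,m_n}_{ij}\partial_{v_j}\beta(f^n_{m_n})$ via Remark \ref{rem:product-rule-interp} as $\partial_{v_j}(\abar^{n,m_n}_{ij}\beta(f^n_{m_n})) - (\partial_{v_j}\abar^{n,m_n}_{ij})\beta(f^n_{m_n})$, we obtain an identity in which the quadratic term $\int \varphi(-\beta''(f^n_{m_n}))\abar^{n,m_n}\nabla f^n_{m_n}\cdot\nabla f^n_{m_n}$ equals a finite linear combination of integrals of the form
\[
    \int \abar^{n,m_n}_{ij}\,\beta(f^n_{m_n})\,\partial^{\alpha}\varphi,\quad
    \int (\Div_v\abar^{n,m_n})_i\,\beta(f^n_{m_n})\,\partial^{\alpha}\varphi,\quad
    \int \bbar^{n,m_n}_i\,F(f^n_{m_n},\varepsilon_n)\,\partial^{\alpha}\varphi,\quad
    \int \cbar^{n,m_n}\,G(f^n_{m_n},\varepsilon_n)\,\varphi,
\]
plus the transport term $\int \beta(f^n_{m_n})(\partial_t\varphi + v\cdot\nabla_x\varphi)$.

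The last paragraph is where the work sits, but it is essentially routine bookkeeping once the right bounds are in place. The functions $F$ and $G$ that appear are polynomial combinations of $\beta(f)$, $\beta'(f)$, $B(f)$ and $(1-2\varepsilon f)$, $f(1-\varepsilon f)$; all of these are uniformly bounded in $n$ on $\supp\varphi$ provided one uses the Pauli bound $0\le \varepsilon_n f^n_{m_n}\le 1$ to absorb the potentially bad quantity $B(f^n_{m_n}) = f^n_{m_n} - \log(1+f^n_{m_n})$ inside the factor of $\varepsilon$ that multiplies it (so that $\varepsilon_n B(f^n_{m_n}) \leq \varepsilon_n f^n_{m_n} \leq 1$). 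Together with the $L^1_{loc}$ bounds hypothesized on $\abar^{n,m_n}$, $\Div_v\abar^{n,m_n}$, $\bbar^{n,m_n}$, $\cbar^{n,m_n}$ and the compact support of $\varphi$, each of these integrals is bounded by a constant $C_{T,R}$ independent of $n$. The transport term is controlled trivially because $|\beta(f^n_{m_n})|\le 1$ and $\varphi$ has compact support. Combining these estimates with the algebraic inequality from the first paragraph yields the desired bound. The only mildly delicate point I anticipate is the careful handling of the $\varepsilon$-weighted terms to ensure nothing blows up as $\varepsilon_n\to 0$, but the Pauli exclusion inequality is precisely tailored to make this work uniformly.
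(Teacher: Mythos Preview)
Your proposal is correct and follows essentially the same approach as the paper: test the renormalized formulation \eqref{lfd-renormalized} with $\delta=1$ against a nonnegative cutoff, bound every term except the quadratic one using $|\beta|\le 1$, $|1-2\varepsilon_n f^n_{m_n}|\le 1$, $\varepsilon_n B(f^n_{m_n})\le 1$, $|t\beta'(t)|\le 1$ together with the $L^1_{loc}$ hypotheses on the coefficients, and finish with the pointwise inequality $(\beta')^2\le -\beta''$. One small slip: a cutoff $\varphi\in C^\infty_c((0,\infty)\times\Rxv)$ cannot be identically $1$ on $(0,T)\times B_R\times B_R$; the paper takes $\varphi$ nonzero at $t=0$ (picking up a harmless boundary term $\int_{xv}\beta(f^n_{m_n})(0)\varphi(0)$), and you should do the same.
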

\begin{proof}
	In equation \eqref{lfd-renormalized}, choose $\beta(t) = \frac{t}{1+t}$ as non-linearity and $\varphi \geq 0$ such that $\varphi \equiv 1$ in $(0,T) \times B_R \times B_R$ as a test function.
	This way, we have, since $\beta''(t) = -\frac{2}{(1+t)^3}$
	\begin{align*}
		2 \inttxv \abar^{n,m}_{ij} \frac{1}{(1+f^n_m)^3} \pdv{f^n_m}{v_i} \pdv{f^n_m}{v_j} \varphi
		&\leq \left| \intxv \beta(f^n_m)(0) \varphi(0) \right|
		+ \left| \inttxv \beta(f^n_m) \pdv{\varphi}{t} \right|\\
		&+ \left| \inttxv \beta(f^n_m) v \cdot \nabla_x \varphi \right|
		+ \left| \inttxv \abar^{n,m}_{ij} \beta(f^n_m) \mpdv{\varphi}{v_i}{v_j} \right|\\
		&+ \left| \inttxv \left[
			\pdv{\abar^{n,m}_{ij}}{v_j} \beta(f^n_m)
			+ \bbar^{m,n}_i (1-2\varepsilon_n f^n_m) \beta(f^n_m)
			+ 2 \varepsilon_n \bbar^{m,n} B(f^n_m)
			\right] \pdv{\varphi}{v_i} \right|\\
		&+ \left| \inttxv \cbar^{n,m} \left[ 
			f^n_m (1 - \varepsilon_n f^n_m)\beta'(f^n_m)
			- (1 - 2\varepsilon_n f^n_m)\beta(f^n_m)
			- 2\varepsilon_n B(f^n_m)
			\right] \varphi \right|.		
	\end{align*}

	Since $|\beta(t)| \leq 1$, the first four terms in the right-hand side are bounded by
	\[
		\| \varphi(0) \|_{L^1_{xv}}
		+ \left\| \pdv{\varphi}{t} \right\|_{L^1_{txv}}
		+ \left\| v \cdot \nabla_x \varphi \right\|_{L^1_{txv}}
		+ \big\| \abar^{n,m}_{ij} \big\|_{L^1(\supp \varphi)}
			\left\| \mpdv{\varphi}{v_i}{v_j} \right\|_{L^\infty}.
	\]
	Now, since $0 \leq f^n_m \leq \varepsilon_n^{-1}$ we have $|1-2\varepsilon_n f^n_m| \leq 1$ and for this $\beta$,
	\[
		|\varepsilon_n B(f^n_m)|
		= \varepsilon_n (f^n_m - \log(1+f^n_m))
		\leq \varepsilon_n f^n_m
		\leq 1
	\]
	hence, the fifth term is bounded by
	\[
		\left( 
			\left\| \Div_v \abar^{n,m} \right\|_{L^1(\supp\varphi)}
			+ 2\big\| \bbar^{n,m}_i \big\|_{L^1(\supp\varphi)}
		\right)
		\left\| \varphi \right\|_{L^\infty}.
	\]
	
	Finally, using that $0 \leq t\beta'(t) = \frac{t}{(1+t)^2} \leq 1$ for $t \geq 0$ and $|1 - \varepsilon_n f^n_m| \leq 1$, we bound the last term by
	\[
		4 \big\| \cbar^{n,m} \big\|_{L^1(\supp\varphi)} \| \varphi \|_{L^\infty}.
	\]
	The result then follows since $(\beta'(t))^2 = \frac{1}{(1+t)^4} \leq \frac{1}{(1+t)^3}$.
\end{proof}

We could use Lemma \ref{lem-aij-v-bounded} directly, together with the estimates we have for the $f^n$ solutions, to deduce the compactness of the velocity averages of $\beta(f^n)$.
However, as in the proof for the existence of global solutions to the LFD equation, it turns out that the quantity we must prove compactness for the velocity averages is $f^n (1-\varepsilon_n f^n)$.
We therefore need to do some extra work to get the compactness from one to the other.

\begin{lemma}
	\label{lem:velocity-averages-converge}
	Let $\varepsilon_n \to 0$ be a vanishing sequence of quantum parameters and for each $n$ let $f^n$ be a suitable solution of LFD with quantum parameter $\varepsilon_n$, with $f^n_m \to f^n$ an approximating scheme, as in Definition \ref{def:suitable-weak-solution}.
	Define $F^n_m \equiv f^n_m (1-\varepsilon_n f^n_m)$.
	
	Let $m_n \to \infty$ be such that $(f^n_{m_n})_n$ is weakly compact in $L^1_{loc}((0,\infty); L^1(\Rxv))$,	then the velocity averages
	\[
		\int F^n_{m_n} \varphi \; dv\;
		\text{ and }
		\int \beta_{\delta}(f^n_{m_n}) \varphi \; dv\;
	\]
	are compact in $L^1_{loc}((0,\infty) \times \Rx)$, for every $\varphi \in C^\infty_c(\Rv)$.
\end{lemma}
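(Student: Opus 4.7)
The plan is to apply the velocity-averaging result Theorem \ref{bouchut-vel-avg} to the renormalized LFD equation \eqref{lfd-renormalized} satisfied by $\beta_\delta(f^n_{m_n})$, which directly yields the compactness of $\int \beta_\delta(f^n_{m_n})\varphi\,dv$, and then to transfer this compactness to $\int F^n_{m_n}\varphi\,dv$ via a uniform approximation argument that exploits both the equi-integrability of $(f^n_{m_n})_n$ (inherited from its weak $L^1_{loc}$-compactness) and the fact that $\varepsilon_n \to 0$.

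For the first step, fix $\delta > 0$. Using Remark \ref{rem:product-rule-interp} I rewrite $\partial_{v_i}\{\abar^{n,m_n}_{ij}\partial_{v_j}\beta_\delta(f^n_{m_n})\}$ as $\partial^2_{v_iv_j}[\abar^{n,m_n}_{ij}\beta_\delta(f^n_{m_n})] - \partial_{v_i}[(\Div_v\abar^{n,m_n})_i\,\beta_\delta(f^n_{m_n})]$, so the right-hand side of \eqref{lfd-renormalized} acquires exactly the structure $\sum_{|\alpha|\leq 2}\partial^\alpha_v g^{(\alpha)}_n$ demanded by Theorem \ref{bouchut-vel-avg}. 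Since $|\beta_\delta|$, $|\beta_\delta'|$ and $|B_\delta|$ are pointwise bounded by constants depending only on $\delta$, every coefficient appearing inside a $v$-derivative is uniformly bounded in $L^1_{loc}$ once we assume, as in Lemma \ref{lem-aij-v-bounded}, that $\abar^{n,m_n}$, $\Div_v\abar^{n,m_n}$, $\bbar^{n,m_n}$ and $\cbar^{n,m_n}$ are uniformly bounded in $L^1_{loc}$. The zero-order quadratic term $\beta_\delta''(f^n_{m_n})\abar^{n,m_n}_{ij}\partial_{v_i}f^n_{m_n}\partial_{v_j}f^n_{m_n}$ is bounded in $L^1_{loc}$ uniformly in $n$ by Lemma \ref{lem-aij-v-bounded} applied to $\beta_\delta$, and $\beta_\delta(f^n_{m_n})$ itself is bounded in $L^\infty$ by $1/\delta$, hence in $L^p_{loc}$ for every $p \geq 1$. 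Theorem \ref{bouchut-vel-avg} thus gives that $\int \beta_\delta(f^n_{m_n})\varphi(v)\,dv$ is relatively compact in $L^q_{loc}((0,\infty)\times\Rx)$ for every $q \in [1,\infty)$.

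For the second step I compute
\[
F^n_{m_n} - \beta_\delta(f^n_{m_n}) \;=\; \frac{\delta (f^n_{m_n})^2}{1+\delta f^n_{m_n}} \;-\; \varepsilon_n (f^n_{m_n})^2.
\]
Each summand is dominated pointwise by $f^n_{m_n}$, so the Dunford--Pettis equi-integrability of $(f^n_{m_n})_n$ allows, on any fixed compact set and for any $\eta > 0$, truncating at a level $M$ with $\iiint_{\{f^n_{m_n}>M\}} f^n_{m_n} < \eta$ uniformly in $n$; on $\{f^n_{m_n}\leq M\}$ the first term is bounded by $\delta M f^n_{m_n}$ and the second by $\varepsilon_n M f^n_{m_n}$. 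Consequently the first summand can be made arbitrarily small in $L^1_{loc}$ uniformly in $n$ by taking $\delta$ small, while the second tends to zero in $L^1_{loc}$ as $n \to \infty$ since $\varepsilon_n \to 0$. A diagonal extraction along $\delta = 1/k$ produces a subsequence along which $\int \beta_{1/k}(f^n_{m_n})\varphi\,dv$ is Cauchy in $L^1_{loc}$ for every $k$, and the triangle inequality combined with the uniform approximation shows that $\int F^n_{m_n}\varphi\,dv$ is Cauchy along the same subsequence, yielding its relative compactness in $L^1_{loc}((0,\infty)\times\Rx)$.

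The principal difficulty I anticipate is the clean verification that every term on the right-hand side of \eqref{lfd-renormalized} fits the divergence structure of Theorem \ref{bouchut-vel-avg} with uniform-in-$n$ bounds in $\mathcal{M}_{loc}$; the quadratic term is the most delicate since it is of order zero in $v$ and lacks a manifest divergence form, so its control relies entirely on the entropy-dissipation-type estimate of Lemma \ref{lem-aij-v-bounded} (and, implicitly, on the uniform $L^1_{loc}$ bounds on the coefficients that are assumed in the ambient Lemma \ref{lem:ae-comp}). A secondary subtlety is to arrange the $\delta$-approximation uniformly in $n$, which is precisely what the joint use of equi-integrability and $\varepsilon_n \to 0$ delivers.
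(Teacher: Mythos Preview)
Your proposal is correct and follows essentially the same route as the paper: apply Theorem~\ref{bouchut-vel-avg} to the renormalized equation \eqref{lfd-renormalized} to get compactness of $\int \beta_\delta(f^n_{m_n})\varphi\,dv$, then use equi-integrability together with $\varepsilon_n\to 0$ and a diagonal extraction in $\delta$ to reach $\int F^n_{m_n}\varphi\,dv$. One small slip: $B_\delta$ is \emph{not} pointwise bounded (it grows linearly), but the terms in \eqref{lfd-renormalized} that contain it carry a factor $\varepsilon_n$, and $\varepsilon_n B_\delta(f^n_{m_n})\le \varepsilon_n f^n_{m_n}/\delta\le 1/\delta$ by the Pauli bound $f^n_{m_n}\le \varepsilon_n^{-1}$, so your conclusion is unaffected. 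The paper passes through an intermediate comparison with $\beta_\delta(F^n_{m_n})$ before letting $\delta\to 0$, whereas you compute $F^n_{m_n}-\beta_\delta(f^n_{m_n})$ in one shot; this is a mild streamlining, but the substance is identical.
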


\begin{proof}
	Throughout this proof, we will notate the sequence $f^n_{m_n}$ as $f^n$, as well as the coefficients $\abar^n_{m}$, $\bbar^n_m$ and $\cbar^n_m$ as $\abar^n$, $\bbar^n$ and $\cbar^n$, respectively.

	This proof is divided into three parts.
	First, we will show the compactness of the velocity averages of $\beta_\delta(f^n)$, using the renormalized formulation.
	We then proceed by showing that the velocity averages of $\beta_\delta(f^n)$ converge to the same limit as the velocity averages of $\beta_\delta(F^n)$ and finally we
	relax the value of $\delta > 0$ to achieve compactness for the velocity averages of $F^n$.

	\proofpart{I}{Velocity averages of $\beta_\delta(f^n)$}
	Equation \eqref{lfd-renormalized} can be rewritten as
	\[
		\pdv{\beta_\delta(f^n)}{t} + v\cdot\nabla_x \beta_\delta(f^n) 
		= \mpdv{G^n_{ij}}{v_i}{v_j}
		- \pdv{G_i^n}{v_i} 
		+ G^n
	\]
	where the functions in the right-hand side are given by
	\[
		G_{ij}^n = \abar^n_{ij} \beta_\delta(f^n)
	\]
	\[
		G_{i}^n = \pdv{\abar^n_{ij}}{v_j} \beta_\delta(f^n)
		+\bbar_i^n (1-2\varepsilon_n f^n) \beta_\delta(f^n)
		+2\varepsilon_n \bbar_i^n B_\delta(f^n)
	\]
	\[
		G^n = 
		- \cbar^n \left[f^n(1-\varepsilon_n f^n)\beta_\delta'(f^n)
		- (1-2\varepsilon_n f^n)\beta_\delta(f^n)
		- 2\varepsilon_n B_\delta(f^n)\right]
		- \beta_\delta''(f^n) \abar^n_{ij} \pdv{f^n}{v_i} \pdv{f^n}{v_j}
	\]
	
	Now, since $\beta_\delta(t) \leq \delta^{-1}$, $|1-2\varepsilon_n f^n| \leq 1$ and 
	\[
		|\varepsilon_n B_\delta(f^n)|
		= \varepsilon_n \left( \frac{f^n}{\delta} - \frac{\log(\delta f^n + 1)}{\delta^2} \right)
		\leq \varepsilon_n\delta^{-1} f^n \leq \delta^{-1},
	\]
	it follows that the first two functions satisfy
	\[
		|G_{ij}^n| \leq \delta^{-1} |\abar^n_{ij}|
		\;\text{ and }\;
		|G_i^n| \leq \delta^{-1} \left|\pdv{\abar^n_{ij}}{v_j}\right| + 3\delta^{-1} |\bbar_i^n|
	\]
	and thus these sequences are bounded in $L^1_{loc} ((0,\infty) \times \Rx)$.
	Moreover, since $t \beta_\delta'(t) =\frac{t}{(1+\delta t)^2} \leq \frac{1}{\delta}$, it follows that
	\[
		|G^n| \leq \frac{4}{\delta} |\cbar^n| 
		- \beta_\delta''(f^n) \abar^n_{ij} \pdv{f^n}{v_i} \pdv{f^n}{v_j}
	\]
	and from Lemma \ref{lem-aij-v-bounded}, this sequence is bounded in
	$L^1_{loc}((0,\infty) \times \Rxv)$.
	Since $(\beta_\delta(f^n))_n$ is bounded in $L^\infty((0,\infty) \times \Rxv)$,
	Theorem \ref{bouchut-vel-avg} implies that the velocity averages $\int \beta_\delta(f^n)\varphi\,dv$ are compact in $L^1_{loc}((0,\infty) \times \Rxv)$.
	We then pass to a convergent subsequence.
	
	\proofpart{II}{Velocity averages of $\beta(F^n)$}
	The next part of the proof consists of passing from the compactness of the $\beta(f^n)$ averages to the compactness of the $\beta(F^n)$ averages.
	Notice that
	\[
		|\beta_\delta(F^n) - \beta_\delta(f^n)| 
		= \varepsilon_n f^n \frac{f^n}{1+\delta f^n}\frac{1}{1+\delta F^n}
		\leq \delta^{-1} \varepsilon_n f^n
	\]
	and thus we may estimate the difference of norms as
	\[
		\int_{tx} \left| \int_v \beta(F^n) \varphi - \int_v \beta(f^n) \varphi \right| 
		\leq \inttxv \delta^{-1} \varepsilon_n f^n |\varphi|
		\leq \delta^{-1} \varepsilon_n \|\varphi\|_{L_v^\infty} \| f^n \|_{L^1_{txv}}
	\]
	and since $\| f^n \|_{L^1_{txv}} \leq C$, the above quantity
	converges to zero as $n \to \infty$.
	Since the velocity averages of $\beta_\delta(f^n)$ are compact, the above argument not only shows that the velocity averages of $\beta_\delta(F^n)$ are also compact, but that the two limits coincide.
	
	\proofpart{III}{Velocity averages of $F^n$}
	In this last step, we will use the fact that the $\delta > 0$ taken so far is arbitrary to obtain the compactness of the averages of $F^n$.
	We have, for any $K>1$,
	\[
		| \beta_{\delta}(F^n) - F^n | 
		= \frac{\delta(F^n)^2}{1+\delta F^n}
		= \frac{\delta(F^n)^2}{1+\delta F^n} \mathbbm{1}_{F^n \leq K} 
		+ \frac{\delta(F^n)^2}{1+\delta F^n} \mathbbm{1}_{F^n > K}.
	\]
	Using that $t \mapsto \frac{t}{1+\delta t}$ is an increasing function bounded by $1/\delta$ and that $F^n = f^n(1-\varepsilon_n f^n) \leq f^n$, it follows that
	\begin{align*}
		| \beta_{\delta}(F^n) - F^n |
		&\leq \frac{K}{1+\delta K} \delta F^n \mathbbm{1}_{F^n \leq K}
			+ F^n \mathbbm{1}_{F^n > K} \\
		&\leq K\delta f^n
			+ f^n \mathbbm{1}_{f^n > K}
	\end{align*}
	and therefore
	\[
		\int_{tx} \left| 
			\int_v \beta_{\delta}(F^n) \varphi 
			- \int_v F^n \varphi \right|
		\leq K\delta \|\varphi\|_{L_v^\infty} \inttxv f^n
			+ \|\varphi\|_{L_v^\infty} \inttxv f^n \mathbbm{1}_{f^n > K}.
	\]
	
	But since $(f^n)_n$ is uniformly integrable, for every $\varepsilon > 0$ there exists some $K_{\varepsilon} > 0$ such that
	\[
		\sup_n \inttxv f^n \mathbbm{1}_{f^n > K_\varepsilon} \leq \varepsilon.
	\]
	This way, there exists a $C = C(T,\varphi) > 0$ such that for every $\delta \leq \varepsilon/K_\varepsilon$,
 	\[
		\left\|
			\int_v \beta_{\delta}(F^n) \varphi 
			- \int_v F^n \varphi \right\|_{L^1}
		\leq C(T,\varphi) \varepsilon,
		\quad \forall n \in \mathbb{N}.
	\]
	
	For each $m\in\mathbb{N}$, let $\delta_m > 0$ be such that
	\[
		\left\| \int_v \beta_{\delta_m} (F^n) \varphi
		- \int_v F^n\varphi
		\right\|_{L^1}
		\leq \frac{1}{m},
	\]
	for every $n\in\mathbb{N}$.
	Since the velocity averages for $\beta_\delta(f^n)$	are compact we conclude that, for each $m$, the average $\int_v \beta_{\delta_m}(F^n) \varphi$ will converge, up to a subsequence.
	Let us notate $\rho^n_m = \int_v \beta_{\delta_m}(F^n) \varphi$.
	We will use a diagonal argument	to construct a single subsequence that converges for \emph{every}	$m$.
	
	We have $(\rho^n_1)_n$ compact, thus there exists some sequence $k_1(n)$ such that $( \rho^{k_1(n)}_1 )_n$ converges.
	Next, since $( \rho^{k_1(n)}_2 )_n$	compact, we take a $k_2$ subsequence of $k_1$ such that $( \rho^{k_2(n)}_2 )_n$ converges and so on.
	
	This way, we construct nested sequences $k_{m}$
	such that $( \rho^{k_m(n)}_\ell )_n$ converges, for every $\ell\leq m$.
	Consider the diagonal sequence $(k_n(n))_n$.
	By definition, from index $\ell$ on, this is a subsequence of $(k_\ell(n))_n$,	and thus $( \rho^{k_n(n)}_\ell )_n$	converges for every $\ell \in \mathbb{N}$.
	
	We then pass to this subsequence, notating simply $F^n$ instead of
	$F^{\varphi_n(n)}$.
	For every $n, m \in \mathbb{N}$, we have
	\begin{align*}
		\left\| \int_v F^n \varphi - \int_v F^m \varphi \right\|_{L^1}
		& \leq \left\| \int_v F^n \varphi 
			- \int_v \beta_{\delta_k}(F^n) \varphi 
		\right\|_{L^1}
		+ \left\| 
			\int_v \beta_{\delta_k}(F^n) \varphi
			- \int_v \beta_{\delta_k}(F^m) \varphi 
		\right\|_{L^1}\\
		& \phantom{\leq}
		+ \left\| 
			\int_v \beta_{\delta_k}(F^m) \varphi
			- \int_v F^m \varphi \right\|_{L^1}\\
		& \leq\frac{2}{k}
		+ \left\|
			\int_v \beta_{\delta_k}(F^n) \varphi
			- \int_v \beta_{\delta_k}(F^m) \varphi
		\right\|_{L^1}
	\end{align*}
	and since $(\int \beta_{\delta_k}(F^n) \varphi \,dv)_n$ is a Cauchy
	sequence in $L^1$, we have that $(\int F^n\varphi \,dv)_n$ is also a Cauchy sequence, and thus converges.
\end{proof}

\subsection{Almost everywhere compactness}
Once we have found some compactness in the variables $t$ and $x$ through velocity averaging, the aim now is to find compactness in the variable $v$.

The traditional way of doing this, widely used in the study of parabolic equations, is to search for an ellipticity estimate for the diffusion matrices $\abar^n_{ij}$ that is uniform in $n$, which together with Lemma \ref{lem-aij-v-bounded} would give us a bound on the derivatives of $f$.
Such an estimate. however, would fall apart if we had, for example, $\int_v f^n(1-\varepsilon_n f^n) = 0$ at some $(t,x)$, as this implies $\abar^n(t,x,v) = 0$ at that point.
Thus, this estimate depends primarily on the fact that we don't have a quantum vacuum at any $(t,x)$, which seems to us to be a very difficult estimate to achieve.

But it turns out that such an elliptic estimate is not necessary, and we obtain sufficient compactness in $v$ using only a partial notion of ellipticity satisfied by $\abar$, namely Lemma 2 of \cite{sampaio-2024}, which is rewritten, for our case, as
\begin{lemma}
	\label{lemma-ellipticity}
	Fix $T, R > 0$.
	Define $F^n_m \equiv f^n_m (1 - \varepsilon_n f^n_m)$.
	Suppose there exists some $F \in L^\infty((0,\infty) \times \Rxv) \cap L^1_{loc}((0,\infty); L^1(\Rxv))$ such that
	\[
		\int F^n_{m_n} \varphi\, dv \to \int F \varphi\, dv
		\text{ in }
		L^1([0,T] \times B_R)
	\]
	for every $\varphi \in L^1(\Rv)$.
	Let $\abar^{n,m}$ be a sequence of matrices satisfying \eqref{ineq-uniform-ellip} and
	\[
		K_\alpha \equiv
		\left\{
			(t,x) \in [0,T] \times B_R : \int F\; dv > \alpha
		\right\}.
	\]
	
	Then, for every $\alpha, \varepsilon > 0$ there exists a measurable set $|E| < \varepsilon$ such that
	\[
		\abar^{n, m_n}_{ij}(t,x,v) \eta_i \eta_j \geq C(\alpha, \varepsilon) |\eta|^2,
	\]
	for $(x,t,v,\eta) \in (K_{\alpha} \cap E^c) \times B_R \times \mathbb{R}^N$ and $n \geq n_0(\varepsilon, \alpha)$.
\end{lemma}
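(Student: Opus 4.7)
The plan is to bound the right-hand side of \eqref{ineq-uniform-ellip} from below by exploiting the velocity-average convergence of $F^n_{m_n}$ to $F$. Normalizing $\eta$ and setting $e = \eta/|\eta| \in S^{N-1}$, define
\[
	\varphi_{v,e}(v_*) = \nu\left[1 - \left(\tfrac{v-v_*}{|v-v_*|}\cdot e\right)^2\right] \mathbbm{1}_{|v_*|\leq R'},
\]
so that \eqref{ineq-uniform-ellip} reads $\abar^{n,m_n}_{ij}\eta_i\eta_j \geq |\eta|^2 \int \varphi_{v,e}(v_*)\, F^n_{m_n}(t,x,v_*)\, dv_*$. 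The goal is a uniform-in-$(v,e)$ lower bound on this integral, valid on $K_\alpha$ off a set of small measure.

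I first establish the analogous bound at the limit. Using that $\iint F|v|^2\,dxdv \leq C$, the set $A = \{(t,x)\in K_\alpha : \int_{|v_*|>R'}F\,dv_* \geq \alpha/4\}$ has $|A|<\varepsilon/2$ for $R'$ large, and on $K_\alpha\setminus A$ one has $\int_{|v_*|\leq R'}F\,dv_*>3\alpha/4$. Since $F \leq M$ in $L^\infty$, the mass of $F$ inside the double cone about $v + \mathbb{R} e$ with half-angle $\theta$ intersected with $B_{R'}$ is at most $M\,C_N\,\theta^{N-1}(R+R')^{N}$, which falls below $\alpha/4$ once $\theta$ is chosen small (depending only on $\alpha,M,R,R',N$). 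Outside this cone the weight $1 - \bigl(\frac{v-v_*}{|v-v_*|}\cdot e\bigr)^2 \geq \sin^2\theta$, so
\[
	\int \varphi_{v,e}\, F\, dv_* \geq c(\alpha,\varepsilon) := \tfrac{\nu \alpha}{2}\sin^2\theta
\]
uniformly for $(t,x)\in K_\alpha \setminus A$ and $(v,e)\in B_R \times S^{N-1}$.

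To transfer the estimate to $F^n_{m_n}$, pick a countable dense set $\{(v_k,e_k)\}\subset B_R\times S^{N-1}$. Applying the hypothesis with $\varphi = \varphi_{v_k,e_k}$ and combining diagonal extraction with Egoroff's theorem, there is a subsequence (still indexed by $n$), a set $E' \subset [0,T]\times B_R$ of measure $<\varepsilon/4$, and $n_0$ such that $|\int (F^n_{m_n}-F)\varphi_{v_k,e_k}\,dv_*|<c(\alpha,\varepsilon)/4$ for every $k$, every $n\geq n_0$, and every $(t,x) \in ([0,T]\times B_R)\setminus E'$. To upgrade this to a bound uniform in $(v,e)\in B_R \times S^{N-1}$, I need equicontinuity in $(v,e)$ of the family $(t,x)\mapsto \int F^n_{m_n}\varphi_{v,e}\,dv_*$, with modulus uniform in $n$. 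This follows from a splitting argument: the map $(v,e)\mapsto \varphi_{v,e}$ is $L^1(\Rv)$-continuous by dominated convergence, and the family $\{F^n_{m_n}\}$ is uniformly integrable in $v_*$ thanks to the $L\log L$ control inherited from the entropy bound on $f^n_{m_n}$ proven in Lemma \ref{lem:diag-weak-convergence} (via de la Vallée--Poussin). A further excision from the exceptional set of the small region where $\int f^n_{m_n}\,dv$ is atypically large accommodates this integrability estimate without spoiling $|E|<\varepsilon$.

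Assembling these ingredients, for $n\geq n_0$ and $(t,x)\in K_\alpha\setminus E$, with $E=A\cup E'\cup\cdots$ of measure $<\varepsilon$, one has
\[
	\int \varphi_{v,e}(v_*)\,F^n_{m_n}(t,x,v_*)\,dv_* \geq c(\alpha,\varepsilon)/2
\]
uniformly in $(v,e)\in B_R\times S^{N-1}$, and \eqref{ineq-uniform-ellip} concludes the proof with $C(\alpha,\varepsilon) = c(\alpha,\varepsilon)/2$. The main obstacle is the equicontinuity step: the crude pointwise bound $F^n_{m_n}\leq 1/(4\varepsilon_n)$ diverges as $n\to\infty$, so a naive $L^\infty$ estimate on the test-function difference is useless, and one must rely instead on uniform integrability from the entropy control to convert $L^1$-closeness of $\varphi_{v,e}$ and $\varphi_{v',e'}$ into smallness of $\int F^n_{m_n}|\varphi_{v,e}-\varphi_{v',e'}|\,dv_*$.
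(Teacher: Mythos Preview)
The paper does not give its own proof of this lemma --- it is quoted from \cite{sampaio-2024}, Lemma~2 --- so there is nothing to compare against, but your argument has a genuine gap in the equicontinuity step. You correctly see that the $L^\infty$ bound $F^n_{m_n}\leq 1/(4\varepsilon_n)$ is useless and propose to substitute the $L\log L$ control from Lemma~\ref{lem:diag-weak-convergence}. But that lemma only yields $\int_0^T\iint f^n_{m_n}|\log f^n_{m_n}|\,dxdv\,dt\leq C$, i.e.\ an $L^1_{t,x}$ bound on $h_n(t,x):=\int f^n_{m_n}|\log f^n_{m_n}|\,dv$. Your equicontinuity argument requires the \emph{pointwise} bound $h_n(t,x)\leq M$ for all large $n$ simultaneously, off a single small exceptional set. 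That would follow if $h_n$ converged in measure, but at this point in the paper the strong compactness of $f^n_{m_n}$ is precisely what remains to be proved, and the lemma is an input to that proof. Excising ``where $\int f^n_{m_n}\,dv$ is large'' controls only the total mass, not the equi-integrability; and excising $\{h_n>M\}$ for each $n$ separately does not produce one small set, since nothing prevents $\bigl|\bigcup_{n\geq n_0}\{h_n>M\}\bigr|$ from being large.

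A robust fix is to replace the angular weight in \eqref{ineq-uniform-ellip} by the pointwise lower bound $(R+R')^{-2}\,(v-v_*)^T(I-e\otimes e)(v-v_*)$ on $|v_*|\leq R'$. The resulting lower bound on $\int F^n_{m_n}\varphi_{v,e}\,dv_*$ is then a polynomial in $(v,e)$ whose finitely many coefficients are the moments $\int_{B_{R'}}v_*^\gamma F^n_{m_n}\,dv_*$, $|\gamma|\leq 2$; each of these converges in $L^1_{t,x}$ by hypothesis, so Egoroff applied to this \emph{finite} family gives uniform convergence of the coefficients off a small set, and hence uniform-in-$(v,e)$ convergence of the polynomial on the compact set $B_R\times S^{N-1}$. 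The infimum over $(v,e)$ then passes to the limit with no equicontinuity hypothesis on $F^n_{m_n}$, and your cone-and-$L^\infty$ argument on $F$ supplies the required lower bound on the limiting infimum over $K_\alpha\setminus A$.
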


This ellipticity, together with Lemma \ref{lem-aij-v-bounded}, gives us an estimate for the derivatives of $f$ in $v$, which incurs a compactness in the variable $v$.
Having already established compactness in $t$ and $x$ from the averaging lemmas in the last section, the next question is whether we can unify these two results into a single compactness in the three variables $t,x,v$.

The next proposition, which was also stated and proven in \cite{sampaio-2024}, shows us that this question is answered in the affirmative.
\begin{proposition}
	\label{prop-ae-compactness}
	Let $T>0$ and $(\Phi^n)_n$ be a sequence of functions such that $\Phi^n \weakstarto \Phi$ in $L^\infty((0,\infty) \times \Rxv)$.
	Suppose that, for each $T, R > 0$,

	\begin{enumerate}[i., font=\bfseries]
	\item \textbf{Quasi-bounded in $v$:} for every $\varepsilon, \alpha > 0$, there exists a $C = C(\varepsilon, \alpha)$ and a measurable set $E$, with $|E| \leq \varepsilon$ such that
	\[
		\iiint_{(K_\alpha \cap E^c) \times B_R} |\nabla_v \Phi^n|^2 \,dtdxdv \leq C(\varepsilon, \alpha)
	\]
	where
	$
		K_{\alpha} = \left\{ (t,x) \in [0,T] \times B_R : \int \Phi \, dv > \alpha \right\},
	$

	\item \textbf{Velocity averages:} for every $\varphi \in C^\infty_c (\mathbb{R}^N_v)$, $(\int \Phi^n \varphi \,dv)_n$ is compact in $L^1([0,T] \times B_R)$.
	\end{enumerate}

	Thus, passing to a subsequence, we have $\Phi^n \to \Phi$ almost everywhere in $(0,\infty) \times \mathbb{R}^{2N}_{x,v}$.
\end{proposition}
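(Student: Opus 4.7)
The plan is to upgrade the weak-$\ast$ convergence to almost-everywhere convergence by producing strong $L^2_{loc}$-compactness of $\Phi^n$ on the "good" sets where Hypotheses i. and ii. combine effectively, and then exhausting the domain by shrinking the measures of the bad sets. Fix $T, R > 0$; for each $\alpha, \varepsilon > 0$, Hypothesis i. furnishes a set $E = E(\varepsilon, \alpha)$ with $|E| \leq \varepsilon$ and a constant $C(\varepsilon, \alpha)$ such that $\|\nabla_v \Phi^n\|_{L^2((K_\alpha \cap E^c) \times B_R)}^2 \leq C(\varepsilon, \alpha)$ uniformly in $n$. The key idea is to use this quasi-$H^1_v$ bound to compare $\Phi^n$ with a $v$-mollification, to which Hypothesis ii. applies directly.

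More precisely, let $\chi_\rho \in C^\infty_c(B_v(0,\rho))$ be a standard mollifier in $v$. Writing
\[
    \Phi^n(\cdot,\cdot,v) - (\Phi^n *_v \chi_\rho)(\cdot,\cdot,v)
    = \int_{\Rv} \int_0^1 \nabla_v \Phi^n(\cdot,\cdot, v - s w) \cdot w \, \chi_\rho(w) \, ds\, dw,
\]
Cauchy-Schwarz and the support of $\chi_\rho$ yield $\|\Phi^n - \Phi^n *_v \chi_\rho\|_{L^2((K_\alpha \cap E^c) \times B_{R-\rho})} \leq \rho\, C(\varepsilon, \alpha)^{1/2}$. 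On the other hand, for each fixed $v$, the function $\Phi^n *_v \chi_\rho(\cdot,\cdot,v)$ is a velocity average of $\Phi^n$ against the $C^\infty_c(\Rv)$-weight $w \mapsto \chi_\rho(v-w)$, so Hypothesis ii. together with uniqueness of the weak-$\ast$ limit gives $\Phi^n *_v \chi_\rho(\cdot,\cdot,v) \to \Phi *_v \chi_\rho(\cdot,\cdot,v)$ in $L^1([0,T]\times B_R)$ for each such $v$. The uniform $L^\infty$ bound then lets dominated convergence in $v$ upgrade this to $L^2((K_\alpha \cap E^c) \times B_R)$ convergence. Sending $n \to \infty$ at fixed $\rho$, then $\rho \to 0$, and extracting a subsequence yields $\Phi^n \to \Phi$ a.e. on $(K_\alpha \cap E^c) \times B_{R'}$ for any $R' < R$.

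It remains to piece things together. In the intended application the $\Phi^n$ are nonnegative, as is $\Phi$; choosing a cutoff $\varphi \in C^\infty_c(\Rv)$ with $\varphi \geq \mathbbm{1}_{B_R}$ and invoking Hypothesis ii., we have $\int_{B_R} \Phi^n\, dv \leq \int \Phi^n \varphi\, dv \to \int \Phi \varphi\, dv$ in $L^1([0,T]\times B_R)$. On the set $K_0^c := \bigcap_{\alpha > 0} K_\alpha^c$ the limit vanishes, so along a subsequence $\Phi^n \to 0 = \Phi$ a.e. on $K_0^c \times B_R$. Taking sequences $\alpha_k \downarrow 0$ and $\varepsilon_k$ summable, iterating the previous step and diagonalising produces a single subsequence converging a.e. on $(K_{\alpha_k}\cap E_k^c) \times B_R$ for every $k$; Borel-Cantelli then shows that a.e. $(t,x) \in K_0$ lies outside $E_k$ for all large $k$, so the union of these good sets covers $K_0 \times B_R$ modulo null sets. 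Combined with the $K_0^c$ piece and a final exhaustion in $T$ and $R$, this delivers a.e. convergence on $(0,\infty) \times \Rxv$.

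The main obstacle is coordinating the three defect parameters $\rho$, $\varepsilon$ and $\alpha$ together with the shrinkage from $B_R$ to $B_{R-\rho}$ in the mollification estimate: one must send $n \to \infty$ before $\rho \to 0$, because at fixed $\rho$ the velocity-averaging compactness only controls the $(t,x)$-dependence of $\Phi^n *_v \chi_\rho$. It is therefore essential that the constant $C(\varepsilon, \alpha)$ in Hypothesis i. be independent of $\rho$ and $n$ and that the set $E$ be independent of $n$, as only then is the mollification comparison genuinely uniform in $n$; these uniformities are precisely what the hypothesis provides.
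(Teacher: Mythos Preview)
The paper does not actually prove this proposition: it is stated and then attributed to \cite{sampaio-2024}, so there is no in-text argument to compare against. Your sketch is a correct and self-contained proof along the lines one expects for results of this type (combine $v$-mollification controlled by the $H^1_v$ bound with velocity-averaging compactness in $(t,x)$, then exhaust the bad sets). The key steps---the estimate $\|\Phi^n-\Phi^n*_v\chi_\rho\|_{L^2}\le \rho\,C(\varepsilon,\alpha)^{1/2}$ on $(K_\alpha\cap E^c)\times B_{R-\rho}$, the identification of $\Phi^n*_v\chi_\rho(\cdot,\cdot,v)$ as a velocity average so that Hypothesis~ii. plus uniqueness of the weak-$\ast$ limit yields full-sequence $L^1_{t,x}$ convergence for each $v$, the dominated-convergence upgrade in $v$, and the Borel--Cantelli/diagonal exhaustion over $\alpha_k\downarrow 0$ and summable $\varepsilon_k$---are all sound.

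Two small points worth flagging. First, your treatment of $K_0^c$ uses $\Phi^n\ge 0$, which is not part of the proposition as stated; you acknowledge this, and it is indeed satisfied in the paper's application ($\Phi^n=\beta_\delta(F^n_{m_n})\ge 0$), but strictly speaking the proposition either needs this hypothesis added or the $K_0^c$ step handled differently. Second, to pass from ``for each $v$, $\Phi^n*_v\chi_\rho(\cdot,\cdot,v)\to\Phi*_v\chi_\rho(\cdot,\cdot,v)$ in $L^1_{t,x}$'' to $L^1_{t,x,v}$ convergence you are integrating the $L^1_{t,x}$-norm over $v\in B_R$ and invoking dominated convergence; make explicit that the dominating function is the constant $2M\,T\,|B_R|$ coming from the uniform $L^\infty$ bound, since a reader might otherwise look for pointwise-in-$(t,x,v)$ domination.
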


Now we can finally gather all the above results to prove the proposition of this section

\begin{proof}[Proof of Lemma \ref{lem:ae-comp}]
	First, it follows from Lemma \ref{lem-aij-v-bounded} that for every $T,R > 0$, there exists a $C_{T,R} > 0$ such that
	\[
		\iiint_{(0,T) \times B_R \times B_R} \abar^{n,m_n}_{ij} 	\pdv{\beta_1(f^n_{m_n})}{v_i} \pdv{\beta_1(f^n_{m_n})}{v_j} \; dtdxdv
		\leq C_{T,R}.
	\]

	One would then like to chain together Lemmas \ref{lem:velocity-averages-converge} and \ref{lemma-ellipticity}.
	Since we have \eqref{ineq:f-logf-bound-for-lemma-3}, for every diagonal sequence $m_n$ such that $m_n \geq M_{n,T}$ we have a subsequence such that $f^n_{m_n} \weakto f$ weakly in $L^1_{loc}((0,\infty) \times \Rxv)$.
	Passing to a further subsequence in $n$, we have that $\beta_{\delta}(F^n_{m_n}) \weakstarto b$ weakly in $L^\infty((0, \infty) \times \Rxv)$, for some bounded function $b$.
	
	Note that we cannot apply directly Proposition \ref{prop-ae-compactness} here, since, the $K_\alpha$ that is given by Lemma \ref{lemma-ellipticity}, defined in terms of $\int F \; dv$, is not necessarily the same as the one in the statement of Proposition \ref{prop-ae-compactness}, which is written in terms of $\int b \; dv$.
	
	To fix this, notice that
	\[
		| \beta_{\delta}(F^n_{m_n}) - \beta_{\delta}(f^n_{m_n}) |
		\leq \delta^{-1} \varepsilon_n f^n_{m_n},
	\]
	which implies
	\[
		\left|
			\int \beta_{\delta}(F^n_{m_n}) \varphi
			- \int \beta_{\delta}(f^n_{m_n}) \varphi
		\right|
		\leq \delta^{-1} \varepsilon_n \int f^n_{m_n} \varphi
	\]
	for every $\varphi \in C^\infty((0,\infty) \times \Rxv)$,
	and thus this quantity converges to zero as $n \to \infty$.
	Using that $\beta_{\delta}(t) \leq t$ it follows that, for every $\varphi \in C^\infty$ such that $\varphi \geq 0$ satisfies
	\[
		\inttxv \beta_{\delta}(F^n_{m_n}) \varphi
		\leq \inttxv F^n_{m_n} \varphi,
	\]
	which passing to the limit implies that $b \leq F$ almost everywhere.
	
	Thus, we have that
	\[
		K'_\alpha \equiv 
		\left\{ 
			(t,x) \in [0,T] \times B_R : \int b \, dv > \alpha
		\right\}
		\subset
		\left\{ 
			(t,x) \in [0,T] \times B_R : \int F \, dv > \alpha
		\right\}
		\equiv K_\alpha
	\]
	and therefore Lemma \ref{lemma-ellipticity} will give us that, in particular, 
	\[
		\iiint_{(K'_\alpha \cap E^c) \times B_R}
		\left|\nabla_v \beta_1(f^n_{m_n})\right|^2 
		\leq C(\varepsilon,\alpha),
	\]
	and then since the velocity averages $\int \beta_{1}(f^n_{m_n}) \varphi \; dv$ are compact,
	Proposition \ref{prop-ae-compactness} then implies that, passing to a subsequence, we have $\beta_1(f^n_{m_n})$ converging almost everywhere in $(0,\infty) \times \Rxv$, which implies that $f^n_{m_n}$ also converges a.e. in this space.
	
	Since this sequence is weakly compact in $L^1_{loc}((0,\infty); L^1(\Rxv))$, we can then infer the strongly compactness in this space from Scheffé's lemma.
\end{proof}

\section{Proof of Theorem 1}
We now have all the necessary ingredients for the proof of Theorem 1, which we will divide into two sections.
In the first part, we'll apply the compactness results we've seen so far to show that the solutions of the Landau-Fermi-Dirac equation converge to renormalized solutions of the Landau equation, as in Villani's definition.
In the second part of the proof, we will show the convergence of the conservation laws and the dissipation of entropy, showing that the entropy inequality is still valid for solutions in the semi-classical limit.

\subsection{Convergence to renormalized solutions}
For each $n$, let $(f^n_m)_m$ be an approximating scheme converging to $f^n$, with coefficients $\abar^{n,m}$ and $\bbar^{n,m}$.

Let $B_n = (0,n) \times B_x(0,n) \times B_v(0,n)$ be the $t,x,v$ ball with radius $n$.
Since $f^n_m \to f^n$ in $L^1_{loc}((0,\infty) \times \Rxv)$ we can suppose, up to passing to subsequences in $m$, that
\begin{equation}
	\label{eq:fn-fnm-l1-norm}
	\| f^n - f^n_m \|_{L^1} \leq \frac{1}{n}
\end{equation}
for every $m \in \mathbb{N}$, as well as
\begin{equation}
	\label{ineq:coeff-conv-1}
	\| \abar^{n,m}_{ij} - \abar^n_{ij} \|_{L^1(B_n)},
	\left\| \pdv{\abar^{n,m}_{ij}}{v_j} - \pdv{\abar^n_{ij}}{v_j} \right\|_{L^1(B_n)},
	\| \bbar^{n,m}_i - \bbar^n_i \|_{L^1(B_n)},
	\| \cbar^{n,m} - \cbar^n \|_{L^1(B_n)}
	\leq \frac{1}{n},
\end{equation}
from the convergences in Definition \ref{def:suitable-weak-solution}.

This way, by chaining Lemmas \ref{lem:diag-weak-convergence} and \ref{lem:ae-comp} we obtain a diagonal sequence $(f^n_{m_n})_n$ that is strongly compact in $L^1_{loc}((0,\infty) \times \Rxv)$ and we can then pass to a convergent subsequence.

For conciseness, we will notate $f^n_n$ instead of $f^n_{m_n}$, as well as $\abar^{n,n}$ instead of $\abar^{n,m_n}$ for the other coefficients and so on.
Passing to a further subsequence we have that
\begin{equation}
	\label{conv:fnn-convergence}
	f^n_n \to f \text{ in } L^1_{loc}((0,\infty); L^1(\Rxv))
	\text{ and a.e. in } (0,\infty) \times \Rxv.
\end{equation}

Since $f^n_n \in L^2_{loc}((0,T) \times \Rx; H^1_{loc}(\Rv))$, Proposition \ref{prop:approxim-implies-renorm} ensures that $f^n_n$ satisfies the renormalized formulation, that is, for every $\delta > 0$, taking $\beta_{\delta}(t) = \frac{t}{1+\delta t}$, we have
\begin{multline*}
	\pdv{\beta_\delta(f^n_n)}{t} + v\cdot\nabla_x \beta_\delta(f^n_n)
	= \pdv{}{v_i}
	\left\{
		\abar^{n,n}_{ij} \pdv{\beta_\delta(f^n_n)}{v_j}
		-\bbar^{n.n}_i (1 - 2\varepsilon_n f^n_n) \beta_\delta(f^n_n)
		- 2 \varepsilon_n \bbar^{n,n}_i B(f^n_n)
	\right\}\\
	-\beta_\delta''(f^n_n) \abar^{n,n}_{ij} \pdv{f^n_n}{v_i} \pdv{f^n_n}{v_j}
	-\cbar^{n,n} \left[
		f^n_n (1 - \varepsilon_n f^n_n)\beta_\delta'(f^n_n)
		- (1 - 2\varepsilon_n f^n_n)\beta_\delta(f^n_n)
		- 2\varepsilon_n B_\delta(f^n_n)
	\right]
\end{multline*}
in $\mathcal{D}'((0,\infty) \times \Rxv)$.

From \eqref{conv:fnn-convergence} we have $\beta_\delta(f^n_n) \to \beta_\delta (f)$ a.e., thus since $|\beta_\delta(f^n_n)| \leq \frac{1}{\delta}$, it follows that
\[
	\pdv{\beta_\delta(f^n_n)}{t} + v \cdot \nabla_x \beta_\delta(f^n_n)
	\to
	\pdv{\beta_\delta(f)}{t} + v \cdot \nabla_x \beta_\delta(f).
\]
in $\mathcal{D}'((0,\infty) \times \Rxv)$.

Next, let us prove the convergence of the coefficients $\abar^{n,n}_{ij}$, $\pdv{\abar^{n,n}_{ij}}{v_j}$, $\bbar^{n,n}_i$ and $\cbar^{n,n}$.

Consider $\abar^n_{ij} = a_{ij} *_v (f^n(1-\varepsilon f^n))$.
Recall that the convolution of a function $g$ with a signed measure $\mu$ is the function defined by
\[
	(\mu * g)(x) \equiv
	\int g(x-x_*) d\mu(x_*)
\]
and we have the inequality
\[
	\| \mu * g \|_{L^1} \leq \| g \|_{L^1} |\mu|(\mathbb{R}^N).
\]

Therefore if $g^n \to g$ in $L^1$, then $\mu * g^n \to \mu * g$ in $L^1$.
For $\abar^n_{ij}$ we need a strong convergence result for $f^n(1-\varepsilon_n f^n)$.
We have that $f^n (1 - \varepsilon_n f^n) \to f$ almost everywhere.
Since for every $n$ we have $\left|f^n (1-\varepsilon_n f^n)\right| \leq f^n$, the sequence $f^n$ converges to $f$ almost everywhere and in $L^1$, thus by dominated convergence, $f^n (1 - \varepsilon_n f^n)\to f$ in $L^1$.

This implies that $\abar^n_{ij} \to \abar_{ij}$, $\pdv{\abar^n_{ij}}{v_j} \to \pdv{\abar_{ij}}{v_j}$, $\bbar^n_i \to \bbar_i$ and $\cbar^n \to \cbar$ in $L^1_{loc}((0,\infty) \times \Rxv)$, then using \eqref{ineq:coeff-conv-1}, we have that
\[
	\abar^{n,n}_{ij} \to \abar_{ij}, \;
	\pdv{\abar^{n,n}_{ij}}{v_j} \to \pdv{\abar_{ij}}{v_j}, \;
	\bbar^{n,n}_i \to \bbar_i, \;
	\cbar^{n,n} \to \cbar
\]
in $L^1_{loc}((0,\infty) \times \Rxv)$ and, by passing to a further subsequence, a.e. in $(0,\infty) \times \Rxv$.
Notice that since $|\beta_\delta(f^n_n)| \leq \frac{1}{\delta}$ we have, from dominated convergence, that $\beta_\delta(f_n) \weakstarto \beta_\delta(f)$ in $L^\infty((0, \infty) \times \Rxv)$.

Following Remark \ref{rem:product-rule-interp}, we consider
\[
	\abar^{n,n}_{ij} \pdv{\beta_\delta(f^n_n)}{v_j}
	= \pdv{}{v_j} \left[ \abar^{n,n}_{ij} \beta_\delta(f^n_n) \right]
		- \pdv{\abar^{n,n}_{ij}}{v_j} \beta_\delta(f^n_n),
\]
and by weak-strong convergence, $\abar^{n,n}_{ij} \beta_\delta(f^n_n) \to \abar_{ij} \beta_\delta(f)$, $\pdv{\abar^{n,n}_{ij}}{v_j} \beta_\delta(f^n_n) \to \pdv{\abar_{ij}}{v_j} \beta_\delta(f)$ in $\mathcal{D}'((0,\infty) \times \Rxv)$ which implies
\[
	\abar^{n,n}_{ij} \pdv{\beta_\delta(f^n_n)}{v_j}
	\to \pdv{}{v_i} \left[ \abar_{ij} \beta_\delta(f) \right]
		- \pdv{\abar_{ij}}{v_j} \beta_\delta(f)
\]
which we will denote $\abar_{ij} \pdv{\beta_\delta(f)}{v_j}$.

Next, since $0 \leq f^n_n \leq \varepsilon_n^{-1}$, we have that $|1 - 2\varepsilon_n f^n_n| \leq 3$ and dominated convergence implies 
\[
	(1 - 2\varepsilon_n f^n_n) \beta_\delta(f^n_n) \weakstarto \beta_\delta(f) \text{ in } L^\infty((0,\infty) \times \Rxv).
\]
Next, there exists a constant $C > 0$ such that $|B_\delta(t)| \leq t/\delta + C$, for every $t \geq 0$, which implies that
\[
	\varepsilon_n B_\delta(f^n_n) \weakstarto 0 \text{ in } L^\infty((0,\infty) \times \Rxv),
\]
again by dominated convergence.

Hence, as before, we have by weak-strong convergence that
\[
	\pdv{}{v_i} \left[
		\bbar^{n,n}_i (1 - 2\varepsilon_n f^n_n) \beta_\delta(f^n_n)
		+ 2\varepsilon_n \bbar^{n,n}_i B_\delta(f^n_n)
	\right] 
	\to
	\pdv{}{v_i} \left[	\bbar_i \beta_\delta(f) \right]
\]
in $\mathcal{D}'((0, \infty) \times \Rxv)$.

Also, since $| f^n_n (1 - \varepsilon_n f^n_n) | \leq f^n_n$ and $t \beta'_\delta(t) = \frac{t}{(1+\delta t)^2} \leq \frac{1}{4 \delta}$ for every $t \geq 0$ we have that
\[
	f^n_n (1 - \varepsilon_n f^n_n) \beta'_\delta(f^n_n) \weakstarto f \beta'_\delta(f) \text{ in } L^\infty((0,\infty) \times \Rxv).
\] 
Thus, from weak-strong convergence,
\[
	\cbar^{n,n} \left[ 
		f^n_n (1 - \varepsilon_n f^n_n) \beta'_\delta(f^n_n)
		- (1 - 2\varepsilon_n f^n_n) \beta_\delta(f^n_n)
		- 2\varepsilon_n B_\delta(f^n_n)
	\right]
	\to \cbar \left[ f\beta'_\delta(f) - \beta_\delta(f) \right]
\]
in $\mathcal{D}'((0, \infty) \times \Rxv)$.

It remains just to prove the convergence of the quadratic term
\begin{equation}
	\label{eq:thm1-abar-fn-quadratic}
	-\beta_\delta''(f^n_n) \abar^{n,n}_{ij} \pdv{f^n_n}{v_i} \pdv{f^n_n}{v_j}
\end{equation}
in the sense of distributions.
Let then
\[
	\gamma_\delta(t) = - \frac{2\sqrt{2}}{\sqrt{\delta(1+\delta t)}},
\]
notice this implies $(\gamma'_\delta(t))^2 = \frac{2\delta}{(1 + \delta t)^3} = - \beta_\delta''(t)$.
We can then rewrite
\[
	\inttxv -\beta_\delta''(f^n_n) \abar^{n,n}_{ij} \pdv{f^n_n}{v_i} \pdv{f^n_n}{v_j} \varphi
	= \inttxv \abar^{n,n}_{ij} \pdv{\gamma_\delta(f^n_n)}{v_i} \pdv{\gamma_\delta(f^n_n)}{v_j} \varphi,
\]
for every $\varphi \in C^\infty_c((0,\infty) \times \Rxv)$ such that $\varphi \geq 0$ and by \eqref{eq:abar-m-elliptic} this is larger than
\begin{align*}
	\inttxv \left[
		a_{m_n} *_v (f^n_n (1-\varepsilon_n f^n_n))
		\right]
		&\pdv{\gamma_\delta(f^n_n)}{v_i} \pdv{\gamma_\delta(f^n_n)}{v_j} \varphi\\
	&= \inttxv \int_{v_*} a_{m_n}(v-v_*) f^n_n(v_*) (1-\varepsilon_n f^n_n(v_*))
		\pdv{\gamma_\delta(f^n_n)}{v_i} \pdv{\gamma_\delta(f^n_n)}{v_j} \varphi\\
	&= \inttxv \int_{v_*}
		\left| 
			\sqrt{a_{m_n}(v-v_*) } \sqrt{f^n_n(v_*) (1-\varepsilon_n f^n_n(v_*))} \nabla_v \gamma_\delta(f^n_n)
		\right|^2 \varphi,
\end{align*}
where $\sqrt{a_{m_n}(v-v_*)}$ denotes the matrix square root of $a_{m_n}(v-v_*)$.
For conciseness of notation, let us write $a_n$ instead of $a_{m_n}$.
Hence, there exists a positive Radon measure 
\begin{equation}
	\label{eq:thm-1-proof-mu-n}
	\nu^n \equiv 
	-\beta_\delta''(f^n_n) \abar^{n,n}_{ij} \pdv{f^n_n}{v_i} \pdv{f^n_n}{v_j}
	- \left| 
		\sqrt{a_{n}(v-v_*) } \sqrt{f^n_n(v_*) (1-\varepsilon_n f^n_n(v_*))} \nabla_v \gamma_\delta(f^n_n)
	\right|^2
\end{equation}
and we have
\[
	\langle \nu^n, \varphi \rangle
	\leq \inttxv -\beta_\delta''(f^n_n) \abar^{n,n}_{ij} \pdv{f^n_n}{v_i} \pdv{f^n_n}{v_j} \varphi.
\]
This last quantity is uniformly bounded in $n$ thanks to Lemma \ref{lem-aij-v-bounded}, thus up to passing to a subsequence in $n$, there exists a positive measure $\mu$ such that
\[
	\nu^n \weakto \nu
	\text{ in }
	\mathcal{M}_{loc} ((0,\infty) \times \Rxvv)
\]
where $\Rxvv$ denotes the space $\mathbb{R}^N_x \times \mathbb{R}^N_v \times \mathbb{R}^N_{v_*}$.

Then, we rewrite the term inside the square of \eqref{eq:thm-1-proof-mu-n} as
\begin{multline}
	\label{eq:fnn-quadratic}
	\sqrt{a_n(v-v_*) } \sqrt{f^n_n(v_*) (1-\varepsilon_n f^n_n(v_*))} \nabla_v \gamma_\delta(f^n_n)
	=\\
	\Div_v \left[ 
		\sqrt{a_n(v-v_*)} \sqrt{f^n_n(v_*) (1-\varepsilon_n f^n_n(v_*))} \gamma_\delta(f^n_n)
		\right]\\
	- \Div_v \left[ \sqrt{a_n(v-v_*)} \right] \sqrt{f^n_n(v_*) (1-\varepsilon_n f^n_n(v_*))} \gamma_\delta(f^n_n).
\end{multline}

The matrices $a_n$ have a fixed form, given by \eqref{eq:abar-m-elliptic} and therefore by the uniqueness of the square root of positive matrices, 
\begin{equation}
	\label{eq:sqrt-a_m}
	\sqrt{a_n(z)}
	= \sqrt{\Gamma_n(|z|)} \left(I - \frac{z \otimes z}{|z|^2}\right),
\end{equation}
which implies, thanks to the convergence of $\Gamma_n(|z|)$, that $\sqrt{a_n(z)} \to \sqrt{a(z)}$ in $L^2_{loc}(\mathbb{R}^N)$ up to possibly passing to a subsequence in $n$.
Since
\[
	\sqrt{f^n_n(v_*) (1-\varepsilon_n f^n_n(v_*))} \to \sqrt{f_*}
	\text{ in }
	L^2_{loc}((0,\infty) \times \Rxvv)
\]
and $\gamma_\delta(f^n_n) \to \gamma_\delta(f)$ in $L^1_{loc}((0,\infty) \times \Rxvv)$, we can deduce that
\[
	\sqrt{a_n(v-v_*)} \sqrt{f^n_n(v_*) (1-\varepsilon_n f^n_n(v_*))} \gamma_\delta(f^n_n)
	\to \sqrt{a(v-v_*)} \sqrt{f_*} \gamma_\delta(f)
\]
in $L^1_{loc}((0,\infty) \times \Rxvv)$.

Next, notice that the explicit formula \eqref{eq:sqrt-a_m} allows us to calculate the divergence of the matrix $\sqrt{a}$, giving
\[
	\Div_z \left[ \sqrt{a_n(z)} \right]
	= - (N-1) \sqrt{\Gamma_n(|z|)} \frac{z}{|z|^2},
\]
which converges to $\Div_z (\sqrt{a(z)})$ in $L^1_{loc}((0,\infty) \times \Rxvv)$ and therefore we have that
\begin{multline}
	\label{eq:thm-1-convergence-1}
	\sqrt{a_n(v-v_*) } \sqrt{f^n_n(v_*) (1-\varepsilon_n f^n_n(v_*))} \nabla_v \gamma_\delta(f^n_n)\\
	\to
	\Div_v \left[ \sqrt{a(v-v_*)} \sqrt{f_*} \gamma_\delta(f) \right]
	- \Div_v \left[ \sqrt{a(v-v_*)} \right] \sqrt{f_*} \gamma_\delta(f)
\end{multline}
in $\mathcal{D}'((0,\infty) \times \Rxv)$, and we notate the limit distribution as $\sqrt{a(v-v_*)} \sqrt{f_*} \nabla_v \gamma_\delta(f)$.
Lemma \ref{lem-aij-v-bounded} gives us that \eqref{eq:thm1-abar-fn-quadratic} is bounded in $L^1_{loc}((0,\infty) \times \Rxv)$, which then implies that \eqref{eq:fnn-quadratic} is uniformly bounded in $L^2_{loc}((0,\infty) \times \Rxv; L^2(\mathbb{R}^N_{v_*}))$.
Hence the convergence \eqref{eq:thm-1-convergence-1} actually holds weakly in this space and the limit distribution $\sqrt{a(v-v_*)} \sqrt{f_*} \nabla_v \gamma_\delta(f)$ can be represented by a function in $L^2_{loc}((0,\infty) \times \Rxv; L^2(\mathbb{R}^N_{v_*}))$.

This implies there exists some positive defect measure $\mu$ such that
\[
	\left| 
		\sqrt{a_n(v-v_*)} \sqrt{f^n_n(v_*) (1-\varepsilon_n f^n_n(v_*))} \nabla_v \gamma_\delta(f^n_n)
	\right|^2
	\weakto
	\left|
		\sqrt{a(v-v_*)} \sqrt{f_*} \nabla_v \gamma_\delta(f)
	\right|^2
	+ \mu
\]
weakly in $L^1_{loc}((0, \infty) \times \Rxvv)$ and thus the quadratic term can be written as
\[
	\int_{v_*}
	\left|
		\sqrt{a(v-v_*)} \sqrt{f_*} \nabla_v \gamma_\delta(f)
	\right|^2.
\]

\subsection{Conservation laws and entropy inequality}
So far we have used the approximation schemes $(f^n_n)_n$ to show that $f$ satisfies the renormalized Landau equation with defect measure.
In this second part of the proof, we will use the fact that the $(f^n)_n$ are weak solutions of the LFD equation to conclude that $f$ satisfies the conservation laws and inequalities of the Definition \ref{def-renormalized-landau}.

Notice that inequality \eqref{eq:fn-fnm-l1-norm} applied to the diagonal subsequence \eqref{conv:fnn-convergence} implies in particular that 
\begin{equation}
	\label{eq:fn-l1-convergence}
	f^n \to f \text{ in } L^1_{loc}((0,\infty), L^1(\Rxv))
\end{equation}
and passing to a subsequence in $n$ we conclude that $f^n(t) \to f(t)$ in $L^1_{loc}(\Rxv)$ for almost every $t \geq 0$, hence we have
\begin{equation}
	\label{eq:limit-f-conservation-mass}
	\intxv f(t)
	= \lim_{n \to \infty} \intxv f^n(t)
	= \lim_{n \to \infty} \intxv f^n_0
	= \intxv f_0,
\end{equation}
which implies the conservation of mass.
Next we have, from Fatou's lemma,
\[
	\intxv f(t) \psi
	\leq \liminf_{n \to \infty} \intxv f^n(t) \psi
	\leq \liminf_{n \to \infty} \intxv f^n_0 \psi
	= \intxv f_0 \psi,
\]
for $\psi = |v|^2$ or $\psi = |x-tv|^2$, which implies the inequalities of kinetic energy and momentum of inertia, respectively.
Finally, by interpolation, we find that
\[
	\intxv f(t) v_i = \intxv f_0 v_i,
\]
which corresponds to the conservation of momentum.

\subsection{Entropy inequality}

Finally, let us show that the entropy inequality holds for the limit solution $f$.
Since $f^n$ is a weak solution of LFD with quantum parameter $\varepsilon_n$ and initial data $f^n_0$, it obeys the quantum entropy inequality, 
\begin{multline}
	\label{eq:entropy-inequality-proof-1}
	\frac{1}{\varepsilon_n} \intxv \varepsilon_n f^n \log(\varepsilon_n f^n) + (1-\varepsilon_n f^n) \log(1-\varepsilon_n f^n)
	+ \int_0^t \intxv d^n
	\leq\\
	\frac{1}{\varepsilon_n} \intxv \varepsilon_n f^n_0 \log(\varepsilon_n f^n_0) + (1-\varepsilon_n f^n_0) \log(1-\varepsilon_n f^n_0),
\end{multline}
where $d^n$ is the entropy dissipation \eqref{dissipation-def} with quantum parameter $\varepsilon_n$, using the interpretation from Remark \ref{rem:entopy-dissipation-definition}.
From the conservation of mass \eqref{eq:limit-f-conservation-mass} we can rewrite inequality \eqref{eq:entropy-inequality-proof-1} as
\[
	\intxv f^n \log f^n + \frac{1-\varepsilon_n f^n}{\varepsilon_n} \log(1-\varepsilon_n f^n)
	+ \int_0^t \intxv d^n
	\leq
	\intxv f^n_0 \log f^n_0 + \frac{1-\varepsilon_n f^n_0}{\varepsilon_n} \log(1-\varepsilon_n f^n_0).
\]
But since for every $0 \leq x \leq \varepsilon^{-1}$ one has
\[
	-x \leq \frac{1-\varepsilon x}{\varepsilon} \log(1-\varepsilon x) \leq -x + \varepsilon x^2,
\]
the above inequality becomes
\begin{equation}
	\label{eq:entropy-inequality-proof-2}
	\intxv f^n \log f^n
	+ \int_0^t \intxv d^n
	\leq
	\intxv f^n_0 \log f^n_0 + \varepsilon_n \intxv (f^n_0)^2,
\end{equation}
where we have used once again the conservation \eqref{eq:limit-f-conservation-mass}.

Since $\varepsilon_n (f^n_0)^2 \leq f^n_0$ we have from \eqref{eq:initial-data-convergence} and dominated convergence that the right-hand side converges to 
\[
	\intxv f_0 \log f_0.
\]
From \eqref{eq:fn-l1-convergence}, we may extract a further subsequence such that
\begin{equation}
	\label{eq:fn-almost-everywhere-conv}
	f^n \to f
	\text{ a.e. in }
	(0,\infty) \times \Rxv
\end{equation}
which then implies, from Fatou's lemma, that
\begin{equation}
	\label{eq:entropy-inequality-proof-3}
	\intxv f \log f + \liminf_{n \to \infty} \int_0^t \intxv d^n
	\leq \intxv f_0 \log f_0.
\end{equation}

So all that remains is to pass the entropy dissipation $d^n$ to the limit.
From Remark \ref{rem:entopy-dissipation-definition} we recall that, since we don't suppose any regularity of $f$ in $v$, the $\int_0^t \intxv d^n$ should be instead be viewed as a notation for $\| D^n \|_{L^2}^2$ of the norm of $D^n$ in $L^2((0,t) \times \Rxvv)$, where $D^n$ is the function
\begin{multline*}
	D^n = \Div_v \left(
			\sqrt{a(v-v_*)}
			\sqrt{f^n_* (1-\varepsilon_n f^n_*)} \frac{2}{\sqrt{\varepsilon_n}}
			\arcsin\sqrt{\varepsilon_n f^n}
		\right)\\
		- \Div_v \left( \sqrt{a(v-v_*)} \right) \sqrt{f^n_* (1-\varepsilon_n f^n_*)} \frac{2}{\sqrt{\varepsilon_n}} \arcsin \sqrt{\varepsilon_n f^n}\\
	- \Div_{v_*} \left(
			\sqrt{a(v-v_*)}
			\sqrt{f^n (1-\varepsilon_n f^n)} \frac{2}{\sqrt{\varepsilon_n}}
			\arcsin\sqrt{\varepsilon_n f^n_*}
		\right)\\
		+ \Div_{v_*} \left( \sqrt{a(v-v_*)} \right) \sqrt{f^n(1-\varepsilon_n f^n)} \frac{2}{\sqrt{\varepsilon_n}} \arcsin \sqrt{\varepsilon_n f^n_*}
\end{multline*}

From \eqref{eq:fn-almost-everywhere-conv} we can deduce that
\begin{equation}
	\label{eq:entropy-dissipation-conv-1}
	\sqrt{a(v-v_*)} \sqrt{f^n_* (1-\varepsilon_n f^n_*)} 
	\frac{\arcsin \sqrt{\varepsilon_n f^n}}{\sqrt{\varepsilon_n f^n}} \sqrt{f^n}
	\to
	\sqrt{a(v-v_*)} \sqrt{f f_*}
\end{equation}
for almost every $(t,x,v,v_*) \in (0,\infty) \times \Rxvv$.
We also have that
\[
	\left|
		\sqrt{a(v-v_*)} \sqrt{f^n_* (1-\varepsilon_n f^n_*)} 
		\frac{\arcsin \sqrt{\varepsilon_n f^n}}{\sqrt{\varepsilon_n f^n}} \sqrt{f^n}
	\right|
	\leq \frac{\pi}{2} \left| \sqrt{a(v-v_*)} \sqrt{f^n_*} \sqrt{f^n} \right|.
\]

The convergence \eqref{eq:fn-l1-convergence} implies there exists a function $g$ in $L^1_{loc}((0,\infty), L^1(\Rxv))$ such that, passing to a subsequence in $n$, we have $|f^n| \leq g$ uniformly in $n$.
This in turn implies
\[
	\left| \sqrt{a(v-v_*)} \sqrt{f^n_*} \sqrt{f^n} \right|
	\leq \left| \sqrt{a(v-v_*)} \sqrt{g_*} \sqrt{g} \right|.
\]

Let us show that this function is in $L^1_{loc}((0,\infty) \times \Rxvv)$.
Indeed, from uniqueness of the square root of positive matrices,
\begin{equation}
	\label{eq:sqrt-a}
	\sqrt{a(z)}
	= \sqrt{\Gamma(|z|)} \left(I - \frac{z \otimes z}{|z|^2}\right),
\end{equation}
which in view of \eqref{property:cross-section-regularity} and $\sqrt{x} \leq 1+x$ for every $x \geq 0$, implies there exist functions $\alpha \in L^1(\mathbb{R}^N)$ and $\beta \in L^\infty(\mathbb{R}^N)$ such that $\sqrt{a}(z) = \alpha(z) + \beta(z)$.
Consider the compact sets $K_{tx} \subset (0,\infty) \times \mathbb{R}^N_x$, $K_v \subset \mathbb{R}^N_v$ and $K_{v_*} \subset \mathbb{R}^N_{v_*}$, then
\begin{multline*}
	\int_{K_{tx} \times K_v \times K_{v_*}} \left| \sqrt{a(v-v_*)} \right|_{ij} \sqrt{g_*} \sqrt{g}
	\leq \left(
			\| \alpha \|_{L^1} + \| \beta \|_{L^\infty} |K_v|^{1/2} |K_{v_*}|^{1/2}
		\right) \times\\
		\times \|g\|^{1/2}_{L^1(K_{tx} \times K_v)} \|g\|^{1/2}_{L^1(K_{tx} \times K_{v_*})},
\end{multline*}
hence, by dominated convergence, \eqref{eq:entropy-dissipation-conv-1} holds in $L^1_{loc}((0,\infty) \times \Rxvv)$.

Next, the explicit expression \eqref{eq:sqrt-a} allows us to compute the divergence of this matrix, leading
\[
	\Div_z \left[ \sqrt{a(z)} \right]
	= - (N-1) \sqrt{\Gamma(|z|)} \frac{z}{|z|^2}.
\]
The integrability  \eqref{property:cross-section-regularity} allows us to deduce that $\Div_z \sqrt{a(z)}$ is also in $L^1(\mathbb{R}^N) + L^\infty(\mathbb{R}^N)$ and as before we deduce that
\begin{equation}
	\label{eq:entropy-dissipation-conv-2}
	\Div_v \left( \sqrt{a(v-v_*)} \right) \sqrt{f^n_* (1-\varepsilon_n f^n_*)} \frac{\arcsin \sqrt{\varepsilon_n f^n}}{\sqrt{\varepsilon_n f^n}} \sqrt{f^n}
	\to
	\Div_v \left( \sqrt{a(v-v_*)} \right) \sqrt{f f_*}
\end{equation}
in $L^1_{loc}((0,\infty) \times \Rxvv)$.

The convergences \eqref{eq:entropy-dissipation-conv-1} and \eqref{eq:entropy-dissipation-conv-2} imply that
\begin{multline}
	\label{eq:entropy-dissipation-conv-3}
	\Div_v \left(
			\sqrt{a(v-v_*)}
			\sqrt{f^n_* (1-\varepsilon_n f^n_*)} \frac{2}{\sqrt{\varepsilon_n}}
			\arcsin\sqrt{\varepsilon_n f^n}
		\right)\\
		- \Div_v \left( \sqrt{a(v-v_*)} \right) \sqrt{f^n_* (1-\varepsilon_n f^n_*)} \frac{2}{\sqrt{\varepsilon_n}} \arcsin \sqrt{\varepsilon_n f^n}\\
	\to \Div_v \left(
		\sqrt{a(v-v_*)} \sqrt{f f_*}
	\right)
	- \Div_v \left( \sqrt{a(v-v_*)} \right) \sqrt{f f_*}
\end{multline}
in $\mathcal{D}'((0,\infty) \times \Rxvv)$.
Exchanging $v$ and $v_*$ and summing it to \eqref{eq:entropy-dissipation-conv-3}, we conclude that $D^n$ converges to
\begin{multline*}
	D \equiv 2 \Div_v \left(
			\sqrt{a(v-v_*)} \sqrt{f f_*}
		\right)
	- 2 \Div_v \left( \sqrt{a(v-v_*)} \right) \sqrt{f f_*}\\
	- 2 \Div_{v_*} \left(
			\sqrt{a(v-v_*)} \sqrt{f f_*}
		\right)
	+ 2 \Div_{v_*} \left( \sqrt{a(v-v_*)} \right) \sqrt{f f_*}
\end{multline*}
in $\mathcal{D}'((0,\infty) \times \Rxvv)$.

On the other hand, we have from \eqref{eq:entropy-inequality-proof-2} that
\[
	\| D^n \|^2_{L^2} =
	\int_0^t \intxv d^n \leq
	\intxv f^n_0 \log f^n_0 + \intxv f^n_0 - \intxv f^n \log f^n,
\]
and thus $D^n$ is uniformly bounded in $L^2((0,\infty) \times \Rxvv)$ and we have $D^n \weakto D$ weakly in this space.
Then, from the lower semi-continuity of the norm with respect to the weak convergence we have
\[
	\int_0^t \intxv d \equiv
	\| D \|^2_{L^2}
	\leq \liminf_{n \to \infty} \| D^n \|^2_{L^2}
	= \liminf_{n \to \infty} \int_0^t \intxv d^n
\]
which, together with \eqref{eq:entropy-inequality-proof-3}, leads the result.

\section{Proof of Theorem 2}
It remains just to prove the convergence of the quadratic term
$-\beta_\delta''(f^n_n) \abar^{n,n}_{ij} \pdv{f^n_n}{v_i} \pdv{f^n_n}{v_j}$.
As before, we can't prove this converges to something expressible as a function of the objects we have thus far and as a consequence this term which will be responsible for the appearance of a defect measure.
Indeed let
\[
	\gamma_\delta(t) = - \frac{2\sqrt{2}}{\sqrt{\delta(1+\delta t)}},
\]
and we rewrite, in the sense of distributions,
\begin{align*}
	-\beta_\delta''(f^n_n) \abar^{n,n}_{ij} \pdv{f^n_n}{v_i} \pdv{f^n_n}{v_j}
	&= \abar^{n,n}_{ij} \pdv{\gamma_\delta(f^n_n)}{v_i} \pdv{\gamma_\delta(f^n_n)}{v_j}\\
	&= \langle \abar^{n,n} \nabla_v (\gamma_\delta(f^n_n)), \nabla_v (\gamma_\delta(f^n_n)) \rangle\\
	&= \left| \sqrt{\abar^{n,n}} \nabla_v (\gamma_\delta(f^n_n)) \right|^2
\end{align*}
Rewriting the function inside the absolute value as
\begin{equation}
	\label{eq:sqrt-abar-gamma-interp}
	\big( \sqrt{\abar^{n,n}} \big)_{ij} \pdv{\gamma_\delta(f^n_n)}{v_j}
	= \pdv{}{v_j} \left[
		\big( \sqrt{\abar^{n,n}} \big)_{ij} \gamma_\delta(f^n_n)
	\right]
	- \pdv{ \big( \sqrt{\abar^{n,n}} \big)_{ij} }{v_j} \gamma_\delta(f^n_n),
\end{equation}
as before, we want to apply some weak-strong convergence result here to at least deduce the convergence of this term in the sense of distributions.
However, it soon becomes clear that these are not directly applicable in this case, since the convergence properties of the matrix $\sqrt{\abar^{n,n}}$ cannot be deduced directly from the convergence of $\abar^{n,n}$ alone.
For example, the singularities of the derivatives of the square root of a matrix when close to zero prevent us from concluding the convergence of the second term in \eqref{eq:sqrt-abar-gamma-interp}.

What we can do then is approximate this matrix by one that is more regular and therefore we can deduce this term in the limit.
We then follow an approach close to Villani's in \cite{villani_1996} and consider the approximation of the square root $S^k(M) = \Lambda^k(M) \chi^k(M) \sqrt{M}$, where $\chi^k$ is a $C^\infty(\mathbb{S}^n_+)$ function, defined on the space of positive semi-definite matrices $\mathbb{S}^n_+$ such that
\[
	\chi^k(M) =
	\begin{cases}
		1, \text{ if } \lambda_{max}(M) \leq k \text{ and } \lambda_{min}(M) \geq \frac{1}{k}\\
		0, \text{ if } \lambda_{max}(M) \geq 2k \text{ or } \lambda_{min}(M) \leq \frac{1}{2k}
	\end{cases}
\]
where $\lambda_{min}(M)$ and $\lambda_{max}$ denote respectively the smallest and the largest eigenvalue of $M$.
In other words, we truncate $\sqrt{M}$ at high values and at the singularity at zero.
We can show that $S^k$ is a $C^\infty_c$ function such that $S^k(M) \to M$ for every positive semi-definite matrix $M$.
Also, the sequence $(S^k(M))_k$ is increasing with respect to the matrix order and we have $S^k(M) \leq \sqrt{M}$.

Then,
\[
	\big(S^k(\abar^{n,n})^2\big)_{ij} \pdv{\gamma_\delta(f^n_n)}{v_i} \pdv{\gamma_\delta(f^n_n)}{v_j}
	= \left| S^k(\abar^{n,n}) \nabla_v (\gamma_\delta(f^n_n)) \right|^2
\]
and for the term inside the absolute value we have the convergence
\begin{align}
\label{conv:sk-an-to-sk}
\begin{split}
	\big( S^k(\abar^{n,n}) \big)_{ij} \pdv{\gamma_\delta(f^n_n)}{v_j}
	= \pdv{}{v_j} \left[
			\big( S^k(\abar^{n,n}) \big)_{ij} \gamma_\delta(f^n_n)
		\right]
	&- \pdv{ \big( S^k(\abar^{n,n}) \big)_{ij} }{v_j} \gamma_\delta(f^n_n)\\
	&\;\bigg\downarrow\\
	\pdv{}{v_j} \left[
		\big( S^k(\abar) \big)_{ij} \gamma_\delta(f)
	\right]
	&- \pdv{ \big( S^k(\abar) \big)_{ij} }{v_j} \gamma_\delta(f)
	\equiv \big( S^k(\abar) \big)_{ij} \pdv{\gamma_\delta(f)}{v_j}
\end{split}
\end{align}
in the sense of distributions.

Indeed, for each $k$, $S^k$ is a Lipschitz function, the convergence of $\abar^{n,n} \to \abar$ in $L^1_{loc}((0,\infty) \times \Rxv)$ implies that $S^k(\abar^{n,n}) \to S^k(\abar)$ in $L^1_{loc}((0,\infty) \times \Rxv)$.
Moreover, the derivatives of $S^k$ are also Lipschitz, implying the convergence of $\pdv{ S^k(\abar^{n,n}) }{v_j} \to \pdv{ S^k(\abar) }{v_j}$ in $L^1_{loc}((0,\infty) \times \Rxv)$.

Notice that the equality with $\big( S^k(\abar) \big)_{ij} \pdv{\gamma_\delta(f)}{v_j}$ in \eqref{conv:sk-an-to-sk} is a definition of a notation, rather than the proof of this limit, so it's important to notice that this product of functions is not necessarily a pointwise one.
One should rather see the above expression as a black box.
We will also notate $S^k(\abar) \nabla_v \gamma_\delta(f)$ the vector such that the $i$-th component is equal to $\big( S^k(\abar) \big)_{ij} \pdv{\gamma_\delta(f)}{v_j}$.

Even though we have defined a notation for the limit distribution rather than calculated, we can nevertheless show that this distribution can be represented by an $L^2_{loc}((0,\infty) \times \Rxv)$ function.
Indeed, since $S^k(M) \leq \sqrt{M}$, we have that
\begin{equation}
	\label{ineq:sk-uniform-in-k}
	\big(S^k(\abar^{n,n})^2\big)_{ij} \pdv{\gamma_\delta(f^n_n)}{v_i} \pdv{\gamma_\delta(f^n_n)}{v_j}
	\leq \abar^{n,n}_{ij} \pdv{\gamma_\delta(f^n_n)}{v_i} \pdv{\gamma_\delta(f^n_n)}{v_j}
\end{equation}
and from Lemma \ref{lem-aij-v-bounded}, this last term is uniformly bounded in $L^1_{loc}((0,\infty) \times \Rxv)$, which then implies $\big(S^k(\abar^{n,n}) \nabla_v (\gamma_\delta(f^n_n))\big)_n$ is uniformly bounded in $L^2_{loc}((0,\infty) \times \Rxv)$, for every $k$.
We have, therefore
\begin{equation}
	\label{conv:sk-an-l2-weak-conv}
	S^k(\abar^{n,n}) \nabla_v (\gamma_\delta(f^n_n))
	\weakto
	S^k(\abar) \nabla_v (\gamma_\delta(f))
	\text{ in }
	L^2_{loc}((0,\infty) \times \Rxv)
\end{equation}
and the distributions $S^k(\abar) \nabla_v \gamma_\delta(f)$ can be represented by $L^2_{loc}$ functions.

This implies in particular that $|S^k(\abar^{n,n}) \nabla_v (\gamma_\delta(f^n_n)) - S^k(\abar) \nabla_v \gamma_\delta(f)|$ is uniformly bounded in $L^2_{loc}((0,\infty) \times \Rxv)$.
Let then $\mu^k$ be a measure such that, passing to a subsequence in $n$,
\begin{equation}
	\label{conv:mu-k-conv}
	\left| 
		S^k(\abar^{n,n}) \nabla_v (\gamma_\delta(f^n_n))
		- S^k(\abar) \nabla_v \gamma_\delta(f)
	\right|^2
	\weakto
	\mu^k,
\end{equation}
and this way we also have that, for a compact set $K$,
\begin{align*}
	|\mu^k| (K)
	&\leq \liminf_{n \to \infty} 
		\left\|
			\left| 
				S^k(\abar^{n,n}) \nabla_v (\gamma_\delta(f^n_n))
				- S^k(\abar) \nabla_v \gamma_\delta(f)
			\right|^2
		\right\|_{L^1(K)}\\
	&\leq 
		4 \liminf_{n \to \infty}  \| S^k(\abar^{n,n}) \nabla_v (\gamma_\delta(f^n_n)) \|_{L^2(K)}^2\\
	&\leq 
		4 \liminf_{n \to \infty}  \| \abar^{n,n} \nabla_v (\gamma_\delta(f^n_n)) \nabla_v (\gamma_\delta(f^n_n)) \|_{L^1(K)}
\end{align*}
where the last inequality comes from \eqref{ineq:sk-uniform-in-k}.
Hence, $(\mu^k)_k$ is bounded in $\mathcal{M}_{loc}((0,\infty) \times \Rxv)$, and thus extracting a subsequence in $k$ we have that $\mu^k \weakto \mu$.

Lower semi-continuity of the $L^2$ norm with respect to weak convergence implies, using \eqref{conv:sk-an-l2-weak-conv}, that for every $T, R > 0$ we have
\[
	\int_{(0,T) \times B_R \times B_R}
	| S^k(\abar) \nabla_v (\gamma_\delta(f)) |^2
	\leq \int_{(0,T) \times B_R \times B_R}
	| S^k(\abar^{n,n}) \nabla_v (\gamma_\delta(f^n_n)) |^2
\]
and from \eqref{ineq:sk-uniform-in-k} the right-hand side is uniformly bounded by a constant.
Extracting a subsequence in $k$ we impose
\[
	S^k(\abar) \nabla_v (\gamma_\delta(f))
	\weakto
	\sqrt{\abar} \nabla_v (\gamma_\delta(f)),
\]
that is, as before, the distribution $\sqrt{\abar} \nabla_v (\gamma_\delta(f))$ is a notation for the limit of the sequence $S^k(\abar) \nabla_v (\gamma_\delta(f))$, which can be represented by an $L^2_{loc}$ function.

Once again, notice that the above definitions are just notations representing distributions and should not be seen as a pointwise product of functions, if we have no additional hypotheses about the regularity of the functions in question.
If we knew that this was a pointwise product, we could easily show the convergence of $S^k(\abar) \nabla_v \gamma_\delta(f)$ to $\sqrt{\abar} \nabla_v \gamma_\delta(f)$.
Indeed that's the case for $f^n_n$.
Since $\chi^k$ is an increasing sequence converging a.e. to the constant function $1$ we have by Beppo-Levi that 
\begin{equation}
	\label{conv:sk-ann-to-ann}
	|S^k(\abar^{n,n}) \nabla_v (\gamma_\delta(f^n_n))|^2
	= \chi^k(\abar^{n,n})^2 |\sqrt{\abar^{n,n}} \nabla_v (\gamma_\delta(f^n_n))|^2
	\to |\sqrt{\abar^{n,n}} \nabla_v (\gamma_\delta(f^n_n))|^2	
\end{equation}
in $L^1_{loc}((0,\infty) \times \Rxv)$.

This argument doesn't work directly with $S^k(\abar) \nabla_v \gamma_\delta(f)$, because this is not necessarily the pointwise product of $S^k(\abar)$ with $\nabla_v \gamma_\delta(f)$.
However, the following result shows us that this convergence occurs even if we don't have this property.

Now, this result, combined with \eqref{conv:mu-k-conv}, \eqref{conv:sk-ann-to-ann} and the fact that $\mu^k \to \mu$ implies that
\[
	\left| 
		\sqrt{\abar^{n,n}} \nabla_v (\gamma_\delta(f^n_n))
	\right|^2
	\weakto
	\left|
			\sqrt{\abar} \nabla_v \gamma_\delta(f)
	\right|^2
	+ \mu.
\]
and then it suffices to notice that
\[
	\left| 
		\sqrt{\abar^{n,n}} \nabla_v (\gamma_\delta(f^n_n))
	\right|^2
	= \abar^{n,n} \nabla_v (\gamma_\delta(f^n_n)) \nabla_v (\gamma_\delta(f^n_n))
	= -\beta_\delta''(f^n_n) \abar^{n,n}_{ij} \pdv{f^n_n}{v_i} \pdv{f^n_n}{v_j}
\]
and to define $-\beta_\delta''(f) \abar_{ij} \pdv{f}{v_i} \pdv{f}{v_j}$ as the square of the $L^2_{loc}((0,\infty) \times \Rxv)$ function
$\sqrt{\abar} \nabla_v (\gamma_\delta(f))$. 
\printbibliography
\end{document}